\renewcommand{\raggedright}{\justifying}							% texto justificado
\newtheorem{theorem}{Theorem}[section]
\newtheorem{lemma}[theorem]{Lemma}
\newtheorem{proposition}[theorem]{Proposition}
\newtheorem{corollary}[theorem]{Corollary}
\theoremstyle{definition}
\newtheorem{definition}{Definition}
\theoremstyle{remark}
\newtheorem{remark}{Remark}
\newtheorem*{notation*}{Notation}
\newcommand{\Hyp}{\mathbb{H}^3}
\newcommand{\kd}{k_{d}}
\newcommand{\N}{\mathrm{N}}
\newcommand{\SL}{\mathrm{SL}}
\newcommand{\PSL}{\mathrm{PSL}}
\newcommand{\sys}{\mathrm{sys}}
\newcommand{\tr}{\mathrm{tr}}
\newcommand{\K}{\mathrm{Kiss}}
\newcommand{\Od}{\mathcal{O}_d}
\newcommand{\Bd}{\mathrm{SL}_2(\Od)}
\newcommand{\vol}{\mathrm{vol}}
\newcommand{\Isom}{\mathrm{Isom}}
\begin{document}

\title[Hyperbolic 3-manifolds with large kissing number]
        {Hyperbolic 3-manifolds with large kissing number}
\author{Cayo D\'oria}
\author{Plinio G. P. Murillo}
\thanks{D\'oria is grateful for the support of FAPESP grant 2018/15750-9.\\
\indent Murillo was supported by the KIAS Individual Grant MG072601.}
 % at Korea Institute for Advanced Study

	\begin{abstract}
In this article we construct a sequence $\{M_i\}$ of non compact finite volume hyperbolic $3$-manifolds whose kissing number grows at least as $\vol(M_i)^{\frac{31}{27}-\epsilon}$ for any $\epsilon>0$. This extends a previous result due to Schmutz in dimension $2$.
    \end{abstract}

\maketitle 

\section{Introduction}

Let $M$ be a hyperbolic manifold of finite volume. The \emph{systole} of $M$ is the shortest length of a nontrivial closed geodesic, and it is denoted by  $\sys(M)$. A related invariant is the \emph{kissing number} $\K(M)$, defined as the number of free homotopy classes of oriented closed geodesics in $M$ of length $\sys(M)$.  The study of these invariants has a long history, starting from the moduli space of flat tori (see \cite{S94}).

For any $n \ge 2$, there is no a universal upper bound for the kissing number of hyperbolic $n$-manifolds of finite volume. However, the problem of understanding $\K(M)$ becomes interesting if we fix an upper bound for $\vol(M)$ (the volume of $M$), and we study the asymptotic behavior of $\K(M)$ as $\vol(M)$ goes to infinity. More precisely, for each $n$ we consider the maps 
\[ K_n(v)= \sup \{ \K(M)\mid  M \mbox{ is a hyperbolic } n \mbox{-manifold of } \vol(M) \leq v \}. \]
%Indeed, we can construct suitable manifolds with isometry group arbitrarily large such that any isometry produces a different systole. 
% Hence, if we only fix the dimension,  we do not have any upper bound for  kissing numbers.
%Thus, the problem becomes more interesting if we fix an upper bound for the volume. 

% \[
% S_n(v)= \sup \{ \sys(M) \mid  M \mbox{ is a hyperbolic } n \mbox{-manifold of } \vol(M) \leq v \}. \]

%Given a hyperbolic manifold $M$ of finite volume, 
%In principle, we only know the existence of such a geodesic. 
%The values $K_n(v)$ are unknown even for a sequence $v_j \to \infty$. 
Before explaining the main contribution of this paper, let us briefly mention what we can say about the asymptotic behaviour of $K_n(v)$. In the search of a hyperbolic manifold $M$ with large $\K(M)$, we could consider such a manifold having a large isometry group $G(M)$, and such that no isometry fix a shortest closed geodesic. On the other hand, by Kazhdan-Margulis Theorem,  it happens that $|G(M)| \leq C_n \vol(M)$, where $C_n$ depends only on $n$. Therefore, we could only expect that \[\limsup_{v \to \infty} \frac{\log(K_n(v))}{\log(v)} \geq 1.\]

When we restrict the study of $K_n(v)$ to the class of arithmetic hyperbolic manifolds, more can be said. In \cite{S94}, Schmutz started the investigation of extremal values of systole and kissing number of hyperbolic surfaces with a fixed area (see also  \cite{MR1408860} and \cite{MR1394752} for earlier contributions in this problem). In \cite{S97}, Schmutz showed  that the sequence of principal congruence subgroups $\Gamma(N)$ of the modular group $\rm{PSL}(2,\mathbb{Z})$ produce hyperbolic surfaces $S(N):=\Gamma(N) \backslash \mathbb{H}^2$ of finite area satisfying
\begin{align}\label{040220.1}
 \K(S(N)) \geq c_1 \mathrm{area}( S(N))^{\frac{4}{3}-\varepsilon}, \hspace{3mm} N\rightarrow\infty, 
\end{align}
for any $\varepsilon >0$ and a universal constant $c_1>0$.  
In the same paper, the author also constructed compact arithmetic hyperbolic surfaces with kissing number having the same growth as in \eqref{040220.1}. On the other hand, more recent works by Parlier \cite{Par13} and Fanoni-Parlier \cite{FP15} prove the upper bound $$K_2(v) \leq A \dfrac{v^2}{\log(1+v)},$$ for some absolute constant $A>0$. Therefore, we have that 

\begin{equation}\label{eq:SchFanPar}
\frac{4}{3} \leq  \limsup_{v \to \infty} \frac{\log(K_2(v))}{\log(v)}  \leq 2.
\end{equation}
%This upper bound was partially generalized to higher dimensions very recently by Bourque and Petri in \cite{BP19.1}. 
The right-hand side of \eqref{eq:SchFanPar} was partially generalized to higher dimensions very recently by Bourque and Petri in \cite{BP19.1}. Indeed, if we define 
{\small \[ K_{n}^{c}(v)= \sup \{ \K(M)\mid  M \mbox{ is a \emph{closed} hyperbolic } n \mbox{-manifold of } \vol(M) \leq v \}, \]}
it follows from Corollary $2$ in [op.cit] that for all $n \ge 2$ it holds
\[\limsup_{v \to \infty} \frac{\log(K^c_n(v))}{\log(v)} \leq 2. \]

% Falar da dificuldade do problema para o caso geral de variedades hiperbólicas e a relação com o problema de sístole grande motivado pelo caso do toro, falar dos resultados do Hugo e mencionar como Kissing number grande em dimensão 2 está ligado com o problema de sístole grande.

% The first result in direction of generalizing these results for high dimensional hyperbolic manifolds appeared in  \cite{BP19.1} by Bourque and Petri. They showed that in any closed hyperbolic manifold $M$ of dimension $n$ the following relation holds
% \begin{align}\label{310120.1}
%  \K(M) \leq A_n \vol(M) \dfrac{e^{\frac{(n-1)}{2}\sys(M)}}{\sys(M)},
% \end{align}
% where $A_n >0$ depends only on $n$.  An analogous version in the context of regular graphs was proven by Bourque and Petri (\cite{BP19.2}). In the same article, the authors also prove a version of \eqref{040220.1} for some Ramanujan graphs, constructed by Lubotzky--Phillips--Sarnak in \cite{LPS88}.

%Motivated by the results above, we began a research project aimed at 
Motivated by the results above, this article is the first step toward an understanding of the quantity $\limsup_{v \to \infty} \frac{\log(K_n(v))}{\log(v)}$ in higher dimensions, starting by generalizing \eqref{040220.1} to dimension 3, and then providing the first nontrivial lower bound for $\limsup_{v \to \infty}\frac{\log(K_3(v))}{\log(v)}.$ More precisely, we prove the following

%Motivated by the results above, we started a research program aimed to understand the quantity $\limsup_{v \to \infty} \frac{\log(K_n(v))}{\log(v)}$ in higher dimensions. This article is the first step of this program generalizing \eqref{040220.1} to dimension 3, and then providing the first nontrivial lower bound for $\frac{\log(K_3(v))}{\log(v)}.$ More precisely, we prove the following

\begin{theorem}\label{thm:main_theorem}
There exist a sequence $\{M_i\}$ of non-compact finite volume hyperbolic 3-manifolds with $\vol(M_i) \to \infty,$ such that  $$\K(M_i)\gtrsim c \frac{\vol(M_i)^{\frac{31}{27}}}{\log(\vol(M_i))},$$
for some constant $c>0$ independent of $M_i$. In particular, $$\limsup_{v \to \infty} \frac{\log(K_3(v))}{\log(v)}\geq \dfrac{31}{27}.$$
\end{theorem}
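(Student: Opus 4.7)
The plan is to imitate Schmutz's 2D construction by replacing $\PSL(2,\mathbb{Z})$ with a Bianchi group. Fix an imaginary quadratic field $K=\mathbb{Q}(\sqrt{-d})$ with ring of integers $\Od$. For ideals $I\subset\Od$ with $\N(I)\to\infty$, let $\Gamma(I)=\ker(\Bd\to\SL(2,\Od/I))$ be the principal congruence subgroup and set $M_I=\Gamma(I)\backslash\Hyp$. For $\N(I)$ large, Minkowski's lemma makes $\Gamma(I)$ torsion-free, so each $M_I$ is a non-compact hyperbolic 3-manifold of finite volume. I would try to realize the sequence $\{M_i\}$ of Theorem~\ref{thm:main_theorem} from among the $M_I$.

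The strategy rests on three estimates. First, for the systole lower bound: if $\gamma\in\Gamma(I)\setminus\{\pm\mathrm{Id}\}$, write $\gamma=\pm\mathrm{Id}+A$ with $A\in M_2(I)$; the determinant condition forces $\tr(\gamma)\mp 2\in I^2$, so $|\tr(\gamma)\mp 2|$ is bounded below by the length of the shortest nonzero element of $I^{2}$, which by Minkowski is $\gtrsim\N(I)$. Combined with the standard bound $|\tr(\gamma)|\leq 2\cosh(\ell(\gamma)/2)$ relating trace to translation length in $\SL(2,\mathbb{C})$, this gives $\sys(M_I)\geq 2\log\N(I)-O(1)$. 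Second, the index formula $[\Bd:\Gamma(I)]=|\SL(2,\Od/I)|\asymp\N(I)^3$ yields $\vol(M_I)\asymp\N(I)^3$, so in particular $\log\vol(M_I)\asymp\log\N(I)\asymp\sys(M_I)$, which is consistent with the form of the target bound.

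The delicate third ingredient is a lower bound on the number of primitive closed geodesics of length exactly $\sys(M_I)$. I would fix a trace $t=2+\alpha$ where $\alpha\in I^2$ achieves the minimum absolute value, and exhibit matrices $\gamma=\bigl(\begin{smallmatrix}1+a & b\\ c & 1+d\end{smallmatrix}\bigr)\in\Gamma(I)$ with $\tr(\gamma)=t$ by choosing $a,b,c,d\in I$ with $a+d=\alpha$ and $(1+a)(1+d)-bc=1$; any such element is automatically primitive because it has systolic length. To count distinct $\Gamma(I)$-conjugacy classes I would factor the count as (number of $\Bd$-conjugacy classes with trace $t$) $\times$ (splitting of each $\Bd$-class into $\Gamma(I)$-classes). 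The former is controlled by the class number of an order in the quartic field $K(\sqrt{t^2-4})$; the latter equals $[\Bd:\Gamma(I)\cdot Z(\gamma)]$, which is $\asymp\N(I)^3$ divided by the small factor $[Z(\gamma):Z(\gamma)\cap\Gamma(I)]$, since centralizers of loxodromic elements are essentially cyclic and $\gamma$ itself already lies in $\Gamma(I)$.

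The principal obstacle is extracting the exponent $31/27$ rather than something weaker. This requires an effective lower bound on class numbers of the relevant quartic orders (likely of Siegel type, which naturally accounts for the $\log\vol(M_i)$ in the denominator) together with a careful optimization over the family of ideals $I$---perhaps restricting to prime-power ideals $I=\mathfrak{p}^k$ with a chosen splitting behavior, or varying $d$ as well---to balance the volume growth, the systole, and the counting contribution. The systole and volume estimates above are essentially routine Bianchi group computations; I expect the bulk of the technical work to concentrate in this arithmetic counting step.
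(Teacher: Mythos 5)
Your overall architecture (congruence-type subgroups of a Bianchi group, systole control via traces $\equiv \pm 2$ modulo an ideal power, volume $\asymp \N(I)^3$, and kissing number $=$ (number of wide conjugacy classes of the systolic trace) $\times$ (splitting factor $\approx$ index)) matches the skeleton of the paper. But the step you yourself flag as the ``principal obstacle'' is not a technicality to be optimized away: it is the missing central idea, and the route you propose for it does not exist. You fix the level $I$ first, then take the minimal trace $t\equiv 2 \pmod{I^2}$, and hope for an effective Siegel-type \emph{lower} bound on the class number of the order attached to that specific discriminant $t^2-4$ in $K(\sqrt{t^2-4})$. No such individual lower bound is available: Brauer--Siegel-type estimates control the product of the class number with the regulator (ineffectively), and for these relative quadratic extensions the fundamental unit --- which is exactly $\epsilon_{t,u}=\tfrac12(t+u\sqrt{D})$ --- can be enormous, so nothing prevents $h$ from being tiny for your particular $t$. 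The paper's way around this (and the reason its statement is about \emph{some} sequence $M_i$ rather than all congruence levels) is to reverse the order of choices: use Sarnak's \emph{average} class-number theorem (Theorem \ref{thm:Sarnak_growth}) to select discriminants $D_i$ with $h(D_i)\gtrsim |\epsilon_{D_i}|^2/\log|\epsilon_{D_i}|$, and then let the fundamental Pell solution $(t,u)$ of $t^2-Du^2=4$ \emph{determine} the level: the automorph matrices of the $h(D_i)$ inequivalent forms have trace $t$ and are congruent to $\tau\,\mathrm{Id}$ modulo $u$, so one works in the index-two extension $\Bd_{\tau}[u]$ of $\Bd[u\Od]$ (not in the principal congruence subgroup itself, where these matrices need not lie), after possibly replacing $\epsilon_D$ by $\epsilon_D^{m+1}$, $m\le 2$, to force $|t|<\tfrac49|u|^2$. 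Without this reordering your count has no proven lower bound, and consequently no exponent --- in particular $31/27$ cannot be extracted; in the paper it arises concretely as $(3+\tfrac49)/3$, from $|G_i|\gtrsim \N(u_i)^3$ together with the worst-case bound $h(D_i)\gtrsim \N(u_i)^{4/9}/\log(\cdot)$ coming from $|D_i|\gtrsim \N(u_i)^{1/3}$ (Lemma \ref{coffeelemma}) and $\vol(M_i)\asymp\N(u_i)^3$.

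Two secondary gaps. First, identifying which elements realize the systole is more delicate than minimizing $|t-2|$ over $t\equiv 2\pmod{I^2}$: in $\SL_2(\mathbb{C})$ the length satisfies $4\cosh(\ell)=|\tr^2|+|\tr^2-4|$ (Proposition \ref{prop:displacement_equality}), which is not monotone in $|t-2|$, so one must compare all congruence classes of traces as in Proposition \ref{systolestimate}; your bound $|\tr|\le 2\cosh(\ell/2)$ suffices for a systole lower bound but not for pinning down the systolic elements. Second, in your splitting factor $[\Bd:\Gamma(I)Z(\gamma)]$ you need $[Z(\gamma):Z(\gamma)\cap\Gamma(I)]$ bounded, which requires knowing $\gamma$ is (close to) primitive in $\Bd$, not merely primitive in $\Gamma(I)$; the paper gets the uniform bound $2(m_i+1)\le 6$ precisely because its systolic elements are explicit small powers of $\Bd$-primitive automorphs.
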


% In order to produce many of them, we could consider $M$ with a large isometry group $G.$ In the best case, for any closed geodesic $\alpha \subset M$ realizing $\sys(M)$, the orbit of $\alpha$ would have $|G|$ elements, hence we would obtain $\K(M) \geq |G|$. It happens that $|G| \leq C_n \vol(M)$ by Kazhdan-Margulis Theorem, where $C_n$ depends only on $n$. Therefore, if we do not have a criteria which recognizes the set of closed geodesics of minimal length it is expected to have in the best case the lower bound $\limsup_{v \to \infty} \frac{\log(K_n(v))}{\log(v)} \geq 1$ (see the proof of Proposition  \ref{thm:largekissing}). Hence, we say that a lower bound $\lambda$ for $\limsup_{v \to \infty} \frac{\log(K_n(v))}{\log(v)}$ is \emph{nontrival} if $\lambda >1.$

%In this article we give the first step in our project providing a generalization of \eqref{040220.1} to dimension $3$, i.e. we give the first nontrivial lower bound for $\frac{\log(K_3(v))}{\log(v)}.$ More precisely, we prove the following 

%Let us comment the main ingredients of the proof.

% With the notation above, we have the following immediate corollary.
% 
% \begin{corollary}
%  \[\limsup_{v \to \infty} \dfrac{\log(K_3(v))}{\log(v)} \geq  \frac{31}{27} .\]
% \end{corollary}

\begin{remark}
It follows from our construction that the manifolds $\{M_i\}$ in Theorem \eqref{thm:main_theorem} have large systole (see Corollary \eqref{cor:final}). 
%But this is no longer the general case. In fact, it is possible to have families of finite volume hyperbolic manifolds with large kissing number (resp. large systole) and bounded systole (resp. bounded kissing number). 
\end{remark}

% \begin{theorem}\label{thm:sys}
%  The manifolds $\{M_i\}$ given in Theorem \eqref{thm:main_theorem} satisfy
%  \[ \limsup_{i \to \infty}  \frac{\sys(M_i)}{\log(\vol(M_i))} \geq \frac{1}{3}. \] 
% \end{theorem}

Although the proof of Theorem \ref{thm:main_theorem} follows the idea of Schmutz in \cite{S97}, it is not easy to extend the technique to dimensions greater than 2. The main difficulty consists in constructing discrete groups $\overline{\Gamma}_i<\PSL_2(\mathbb{C})$ with finite covolume such that, first we have a control on which elements in $\overline{\Gamma}_i$ realize the systole of $M_i=\overline{\Gamma}_i\backslash \mathbb{H}^3,$ and second we can guarantee the existence of a large number of conjugacy classes of loxodromic elements in $\overline{\Gamma}_i$ realizing the systole. We will follow the equivalent approach of finding subgroups $\Gamma_i$ of $\SL_2(\mathbb{C})$ whose projections $\overline{\Gamma}_i$ into $\PSL_2(\mathbb{C})$ satisfy the properties above.  

An important ingredient in \cite{S97} is the high multiplicity of nonconjugated elements in $\SL_2(\mathbb{Z})$ of a given trace, a result proven by Siegel. As it was mentioned before, the surfaces come from principal congruence subgroups of $\SL_2(\mathbb{Z}),$ and the level of the group depends on the given trace. The result then follows from the fact that the translation length in $\SL_2(\mathbb{R})$ is determined by the trace. In dimension 3 this is no longer the case, and a new version of a trace-length relation in $\SL_2(\mathbb{C})$ is needed. This is presented in Section \ref{sec:hypgeo} after we recall some special features of the geometry of the hyperbolic $3$-space (see Proposition \ref{prop:displacement_equality}).

There is no generalization of Siegel's result for discrete subgroups of $\SL_2(\mathbb{C})$. Instead, we are able to apply a theorem due to Sarnak (\cite{Sar83}) on averages of class numbers of binary quadratic forms over the ring of integers of imaginary quadratic fields (see Theorem \ref{thm:Sarnak_growth}). 
% Since there is not exist a clear generalization for the relation given in Theorem \ref{thm:Sarnak_growth} between class number of forms and conjugacy classes of elements in another lattices of $\SL_2(\mathbb{C})$ different of Bianchi groups, our approach cannot be used for compact manifolds. 
It is worth to mention that the relation given in Theorem \ref{thm:Sarnak_growth} is clear only for Bianchi groups. Hence, our approach cannot be used for compact manifolds.
In Section \ref{sec:quadforms} we review the definitions and important facts about these quadratic forms, and the relation with discrete subgroups of isometries of $\Hyp$. 
This allows us to construct the groups $\Gamma_i$ as suitable index-two normal extension of principal congruence subgroups of $\SL_2(\Od),$ where $\Od$ denotes the ring of integers of an imaginary quadratic field with class number one. This is the content of Section \ref{sec:cong_subpgs_and_systoles}. The proof of Theorem \ref{thm:main_theorem} is completed in Section \ref{sec:proofmainthm}.

\textit{Acknowledgements.} We would like to express our gratitude to the referee, whose careful reading and professional feedback has undoubtedly allowed us to share a much improved article with the community. We also want to thank B. Petri and M. F. Bourque for correspondence. The first author thanks the KIAS for hospitality and financial support during a two-weeks research visit in Korea. 

\vspace{-0.1cm}

\section{Hyperbolic geometry}\label{sec:hypgeo}
\subsection{Hyperbolic 3-manifolds}
The set $\mathbb{H}^3=\mathbb{C}\times (0,\infty)$ equipped with the line element $ds^2=(dx^2+dy^2+dt^2)/t^2$ is a model of the unique simply connected Riemannian 3-manifold with constant sectional curvature $-1,$ called the \emph{hyperbolic 3-space}. The group $\SL_2(\mathbb{C})$ acts on the complex plane by linear fractional transformations: For $M=\begin{pmatrix}
a & b \\
c & d
\end{pmatrix}\in\SL_2(\mathbb{C}),$
the action is defined by $$M\cdot z=\frac{az+b}{cz+d}.$$

%$$z\mapsto \frac{az+b}{cz+d}.$$
 This action extends uniquely to an action in $\Hyp$ via the Poincar\'e extension, and the group $\PSL_2(\mathbb{C})=\SL_2(\mathbb{C})/\{\pm I\}$ identifies with the orientation preserving isometry group $\Isom^{+}(\Hyp)$ of $\Hyp$. By a hyperbolic 3-manifold we mean a quotient $M=\Gamma\backslash\Hyp,$ where $\Gamma$ is a discrete torsion-free subgroup of $\PSL_2(\mathbb{C})$. In this article we will deal with discrete subgroups in $\SL_2(\mathbb{C}),$ and then project them to $\PSL_2(\mathbb{C})$. 

\subsection{The complex translation length in $\SL_2(\mathbb{C})$} 
The elements in $\SL_2(\mathbb{C})$ are classified accordingly to their trace. More precisely, $B\in\SL_2(\mathbb{C})$, $B\neq \rm{Id}$ is said to be \emph{elliptic} if $\tr(B)\in\mathbb{R}$ and $|\tr(B)|<2,$ \emph{parabolic} if $\tr(B)=\pm2$ and \emph{loxodromic} otherwise. For
any loxodromic element $B\in\SL_2(\mathbb{C})$ there exists a geodesic $\beta$ in $\Hyp$ (the axis of $B$) such that $B$ is a screw motion translating along $\beta$ a distance $\ell(B),$ and simultaneously rotating about it an angle $\theta(B)$. The pair of real numbers $\ell(B)$ and $\theta(B)$ are the \emph{translation length} and the \emph{rotational angle} of $B$ respectively. The complex number $\ell(B)+i\theta(B)$ is called the \emph{complex translation length of $B$}. It is known that $\pm \tr(B)$ determines the complex translation length. More precisely the following relation holds (\emph{c.f} \cite[Lem. 12.1.2]{MR03})

\begin{equation}
\label{eq:trace_length_1}
\cosh\left(\frac{\ell(B)+i\theta(B)}{2}\right)=\frac{\pm \tr(B)}{2}.
\end{equation}

\subsection{A length-trace relation}
% Let $\Gamma < \SL_2(\mathbb{C})$ be a torsion free group and let $\overline{\Gamma}$ be its projection in $\PSL_2(\mathbb{C})$. The free homotopy classes of oriented closed geodesics in the hyperbolic manifold $M=\overline{\Gamma}\backslash\Hyp$ are in one-to-one correspondence with the projection in $\PSL_2(\mathbb{C)}$ of conjugacy classes of loxodromic elements in $\Gamma,$ and the length of the geodesic corresponds to the translation length of the loxodromic element. For the purpose of this article, we need to determine $\ell(B)$ from the class of $B,$ and not only the complex translation length. The following result provides this relation (\textit{c.f} \cite[Lemma 5.1]{G15}).

The free homotopy classes of oriented closed geodesics in a hyperbolic manifold $M=\Gamma\backslash\Hyp$ are in one-to-one correspondence with the conjugacy classes of loxodromic elements in $\Gamma,$ and the length of the geodesic corresponds to the translation length of the loxodromic element. Therefore, in order to understand the length of closed geodesics in $M$ we need to study the translation length of the loxodromic elements in $\Gamma$. For the purpose of this article, we need to determine $\ell(B)$ from $B,$ and not only the complex translation length. The following result provides this relation (\textit{c.f} \cite[Lemma 5.1]{G15}).

\begin{proposition}\label{prop:displacement_equality}
For any loxodromic element $B\in \SL_2(\mathbb{C})$ we have
$$4\cosh(\ell(B))=|\tr(B)^2|+|\tr(B)^2-4|.$$

\end{proposition}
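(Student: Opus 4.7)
The plan is to square the given trace--length relation and exploit the identity $\cosh^2 z - \sinh^2 z = 1$ to separate the real translation length from the rotational part.

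First I would set $w = \ell(B) + i\theta(B)$ and rewrite equation \eqref{eq:trace_length_1} as $\tr(B) = 2\cosh(w/2)$. Squaring gives
\begin{equation*}
\tr(B)^2 = 4\cosh^2(w/2), \qquad \tr(B)^2 - 4 = 4\bigl(\cosh^2(w/2) - 1\bigr) = 4\sinh^2(w/2),
\end{equation*}
using the standard identity $\cosh^2 - \sinh^2 = 1$ valid over $\mathbb{C}$. Taking absolute values,
\begin{equation*}
|\tr(B)^2| + |\tr(B)^2 - 4| = 4\bigl(|\cosh(w/2)|^2 + |\sinh(w/2)|^2\bigr).
\end{equation*}

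The remaining task is therefore to show that $|\cosh(w/2)|^2 + |\sinh(w/2)|^2 = \cosh(\ell(B))$ whenever $w = \ell(B) + i\theta(B)$. I would expand using the addition formulas: writing $a = \ell(B)/2$ and $b = \theta(B)/2$, one has $\cosh(a + ib) = \cosh(a)\cos(b) + i\sinh(a)\sin(b)$ and $\sinh(a + ib) = \sinh(a)\cos(b) + i\cosh(a)\sin(b)$. Computing the squared moduli and adding them, the $\cos^2 b$ and $\sin^2 b$ terms both collect the factor $\cosh^2(a) + \sinh^2(a)$, so
\begin{equation*}
|\cosh(w/2)|^2 + |\sinh(w/2)|^2 = \cosh^2(\ell(B)/2) + \sinh^2(\ell(B)/2) = \cosh(\ell(B)),
\end{equation*}
by the double-angle formula for $\cosh$. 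Combining the two displays yields the claim.

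I do not expect a real obstacle here: the identity is essentially a computation in $\cosh$ and $\sinh$ of a complex argument, and the key trick is just noticing that the two quantities $\tr(B)^2$ and $\tr(B)^2 - 4$ are exactly $4\cosh^2(w/2)$ and $4\sinh^2(w/2)$, which pairs up with $|\cosh|^2 + |\sinh|^2 = \cosh^2(\re w) + \sinh^2(\re w) - $wait, more precisely the imaginary part $\theta(B)$ cancels out because it contributes symmetrically to the two moduli. The only mild subtlety is to keep track that $\ell(B)$ and $\theta(B)$ are real; this is what guarantees the clean cancellation of $b = \theta(B)/2$ in the final sum.
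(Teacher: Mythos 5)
Your proof is correct. The identities $\tr(B)^2=4\cosh^2(w/2)$, $\tr(B)^2-4=4\sinh^2(w/2)$ and the computation $|\cosh(a+ib)|^2+|\sinh(a+ib)|^2=\cosh^2 a+\sinh^2 a=\cosh(2a)$ are all valid, and together they give exactly the claimed identity; the stray half-sentence at the end does not affect the argument.

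Your route is genuinely different in presentation from the paper's, although it rests on the same trace--length relation \eqref{eq:trace_length_1}. The paper first proves the auxiliary identity $|\tr(X)-2|+|\tr(X)+2|=4\cosh\left(\frac{\ell(X)}{2}\right)$ by observing that $\tr(X)$ lies on an ellipse with foci $\pm 2$ and semi-major axis $2\cosh\left(\frac{\ell(X)}{2}\right)$ (using the same addition formulas you wrote for the real and imaginary parts of $2\cosh(w/2)$), and then applies this to $X=B^2$ via $\tr(B^2)=\tr(B)^2-2$ and $\ell(B^2)=2\ell(B)$. You bypass both the ellipse interpretation and the passage to $B^2$: you square the trace relation directly, recognize the two terms as $4|\cosh(w/2)|^2$ and $4|\sinh(w/2)|^2$, and finish with $|\cosh|^2+|\sinh|^2=\cosh(2\operatorname{Re})$. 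Your version is a shorter, purely computational verification; the paper's version buys a geometric picture (traces of fixed translation length sweep out confocal ellipses) that makes the appearance of the sum of two absolute values conceptually transparent, at the cost of the extra step through $B^2$.
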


\begin{proof}

Let $X\in\SL_2(\mathbb{C})$ be any loxodromic element.
Writing $\tr(X)=x+iy,$ by \eqref{eq:trace_length_1} we have that $x=\pm 2\cosh\left(\frac{\ell(X)}{2}\right)\cos\left(\frac{\theta(X)}{2}\right)$ and $y= \pm 2\sinh\left(\frac{\ell(X)}{2}\right)\sin\left(\frac{\theta(X)}{2}\right)$.
% By \eqref{eq:trace_length_1} we have that $\tr(X)=\pm 2\cosh\left(\frac{\ell(X)}{2}\right)\cos\left(\frac{\theta(X)}{2}\right)\pm2i\sinh\left(\frac{\ell(X)}{2}\right)\sin\left(\frac{\theta(X)}{2}\right)$.
Therefore 

%  Let $X\in\SL_2(\mathbb{C})$ be any loxodromic element. By \eqref{eq:trace_length_1} we have that $\tr(X)=\pm 2\cosh\left(\frac{\ell(X)}{2}\right)\cos\left(\frac{\theta(X)}{2}\right)\pm2i\sinh\left(\frac{\ell(X)}{2}\right)\sin\left(\frac{\theta(X)}{2}\right)$.
% 
% \begin{align*}
% x&=2\cosh\left(\frac{\ell(X)}{2}\right)\cos\left(\frac{\theta(X)}{2}\right) \mbox{ and} &
% y&=2\sinh\left(\frac{\ell(X)}{2}\right)\sin\left(\frac{\theta(X)}{2}\right),
% \end{align*}

%  
% \[\frac{x^2}{(2\cosh(\frac{1}{2}\ell(X)))^2}+\frac{y^2}{(2\sinh(\frac{1}{2}\ell(X)))^2}=1.\]
%  
\[\frac{x^2}{\left(2\cosh\left(\frac{\ell(X)}{2}\right)\right)^2}+\frac{y^2}{\left(2\sinh\left(\frac{\ell(X)}{2}\right)\right)^2}=1.\]

It means that $\tr(X)$ lies in the ellipse centered in the origin, with focal points $\pm 2,$ and that intersects the real axis in the points $\pm 2\cosh\left(\frac{\ell(X)}{2}\right)$. Hence $|\tr(X)-2|+|\tr(X)+2|=4\cosh\left(\frac{\ell(X)}{2}\right).$ 
Taking $X=B^2,$ and using the fact that    $\tr(B^2)=\tr(B)^2-2$ and $\ell(B^2)=2\ell(B)$ we obtain the result.\qedhere
\end{proof}

\section{Quadratic forms and loxodromic elements}\label{sec:quadforms}

Let $d>0$ be a square free rational integer. Consider the quadratic number field $\kd=\mathbb{Q}(\sqrt{-d}),$ and let $\Od$ denote the ring of integers of $\kd$. From now on, we will consider only $\kd$  with class number one, i.e. $d\in\lbrace 1,2,3,7,11,19,43,67,163\rbrace$ \cite{Gol85}.
It turns out that loxodromic elements in $\Bd$ are in correspondence with  certain binary quadratic forms with coefficients in $\Od$. The aim of this section is to recall this correspondence following mainly the exposition by Sarnak in \cite{Sar83}.

\subsection{A Diophantine equation} The study of binary quadratic forms over $\Od$ is closely related to the study of the solutions $t, u\in\Od$ to the Pell's type equation 
\begin{equation}\label{eq:Pell_Type}
t^2-u^2D=4.
\end{equation}

Consider the set
\[\mathfrak{D}=\{ D \in \Od \mid D \equiv x^2 (\hspace{-4mm}\mod 4) \mbox{ and } D \mbox{ is not a perfect square} \}. \]

% \begin{definition}
% An element $D\in\Od$ is a \emph{discriminant} if $D\hspace{-1mm}\equiv\hspace{-1mm} x^2(\hspace{-3mm}\mod4)$ for some $x\in\Od,$ and $D$ is not a perfect square.
% \end{definition}

For any $D\in \mathfrak{D}$, let $K=k_d(\sqrt{D})$ (the branch of the square root is chosen such that the argument of $\sqrt{D}$ lies in $[0,\pi)$), and let $\mathcal{O}_{K}$ denotes the ring of integers of $K$. For any solution $(t,u)$ of \eqref{eq:Pell_Type} we associate the complex number
 $$\epsilon_{t,u}=\frac{1}{2}(t+u\sqrt{D}).$$ 
  
  It is clear that $\epsilon_{t,u}\in\mathcal{O}_{K}^{\times},$ and the association $(t,u)\mapsto\epsilon_{t,u}$ provides a group structure on the set of solutions of \eqref{eq:Pell_Type} induced by that on $\mathcal{O}_K^{\times}$.

\begin{definition}
Let $D\in \mathfrak{D}$. A solution $(t_0,u_0)$ of the equation $t^2-u^2D=4$ is called a \emph{fundamental solution} if $|\epsilon_{t_0,u_0}|$ is the smallest possible value larger than one. We write $\epsilon_{t_0,u_0}=\epsilon_D$ in this case.

\end{definition}

Since $|\epsilon_D|>1,$ this definition is equivalent to that in \cite[Pag. 275]{Sar83}. It is also known that when $|D|>4,$ the numbers $\pm\epsilon_D^{\pm n}$ with $n>0$, $n\in\mathbb{Z}$ produce all the solutions to \eqref{eq:Pell_Type} (see \cite[\textit{loc. cit.}]{Sar83}). The following gives us information about the growth of $|\epsilon_D^{n+1}|$.

\begin{proposition} \label{Pliniobreak}
Let $D\in \mathfrak{D}$, and let $\epsilon_D$ be a fundamental solution of $t^2-u^2D=4$. Then, if $\epsilon_D^{n+1}=\frac{1}{2}(t_n+u_n\sqrt{D})$ we have $$|t_n|^2-3<|\epsilon_D|^{2(n+1)}<|t_n|^2+3,$$ for any $n\geq 0$.

\end{proposition}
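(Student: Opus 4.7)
The plan is to translate the algebraic identities satisfied by $\epsilon_D$ into bounds on complex moduli. Let $\sigma$ denote the Galois involution of $K/\kd$ that sends $\sqrt{D}\mapsto -\sqrt{D}$. Since $\epsilon_D=\tfrac{1}{2}(t+u\sqrt{D})$ with $(t,u)$ a solution of \eqref{eq:Pell_Type}, the first step is to observe that $\epsilon_D\,\sigma(\epsilon_D)=\tfrac{1}{4}(t^2-u^2D)=1$, so that $\sigma(\epsilon_D)=\epsilon_D^{-1}$. Applying $\sigma$ to both sides of $\epsilon_D^{n+1}=\tfrac{1}{2}(t_n+u_n\sqrt{D})$ gives $\epsilon_D^{-(n+1)}=\tfrac{1}{2}(t_n-u_n\sqrt{D})$, and adding the two equations produces the identity
\[
t_n=\epsilon_D^{n+1}+\epsilon_D^{-(n+1)},
\]
which is the only information about $\epsilon_D$ that the rest of the argument will require.

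With this in hand, I would set $\lambda=\epsilon_D^{n+1}$, so that $|\lambda|=|\epsilon_D|^{n+1}>1$ by the definition of a fundamental solution. Writing an overline for ordinary complex conjugation (to be carefully distinguished from $\sigma$), expanding $|t_n|^2=(\lambda+\lambda^{-1})(\overline{\lambda}+\overline{\lambda}^{-1})$ yields
\[
|t_n|^2=|\lambda|^2+|\lambda|^{-2}+2\,\re(\lambda/\overline{\lambda}).
\]

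To close the argument I would use that $\lambda/\overline{\lambda}$ has modulus one, so the last summand lies in $[-2,2]$, together with $0<|\lambda|^{-2}<1$ coming from $|\lambda|>1$. Combining these two bounds gives $-2<|t_n|^2-|\lambda|^2<3$, which rearranges exactly to $|t_n|^2-3<|\epsilon_D|^{2(n+1)}<|t_n|^2+3$ and settles the claim. The main point that requires care is that two different conjugations enter the picture: the algebraic involution $\sigma$ is what supplies $\sigma(\epsilon_D)=\epsilon_D^{-1}$ via the Pell equation, while complex conjugation governs the modulus on the left-hand side; once these are not confused the estimate is otherwise elementary, and the constant $3$ on either side appears simply as $2+1$ from the two independent error terms in the display above.
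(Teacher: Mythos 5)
Your argument is correct and follows essentially the same route as the paper: both proofs reduce to the identity $t_n=\epsilon_D^{\,n+1}+\epsilon_D^{-(n+1)}$ (you obtain it via the involution $\sigma$, the paper by observing that $\epsilon_D^{\,n+1}$ and its inverse are the roots of $x^2-t_nx+1$ and comparing coefficients of $f\overline{f}$), and then bound $|t_n|^2-|\epsilon_D|^{2(n+1)}=|\epsilon_D|^{-2(n+1)}+2\,\re\bigl(\lambda/\overline{\lambda}\bigr)$ by $3$ using exactly the same triangle-inequality estimate. Your explicit verification that $\sigma(\epsilon_D)=\epsilon_D^{-1}$, which the paper leaves implicit, is a welcome (minor) addition.
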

\begin{proof}

For simplicity, we will write $\epsilon=\frac{1}{2}(t_n+u_n\sqrt{D}),$ and consider the polynomial $f(x)=x^2-t_nx+1$ defined  over $\Od$. The roots of $f$ are precisely $\epsilon$ and $\epsilon^{-1}$. Consider now the polynomial $g(x)=f(x)\overline{f(x)},$ which has roots $\epsilon,\epsilon^{- 1}, \overline{\epsilon}, \overline{\epsilon}^{- 1}$. Factorizing $g$ into linear factors, and comparing the term $x^2$ we obtain that  

$$1+|\epsilon|^2+\epsilon\overline{\epsilon}^{-1}+\epsilon^{-1}\overline{\epsilon}+|\epsilon|^{-2}+1=|t_n|^2+2.$$

On the other hand $|\epsilon|>1,$ so that  by the triangle inequality $|\epsilon\overline{\epsilon}^{-1}+\epsilon^{-1}\overline{\epsilon}+|\epsilon|^{-2}|< 3$. Therefore $-3< |\epsilon|^2-|t_n|^2< 3$ as we claimed.
\end{proof}

\subsection{Quadratic forms}
Let $Q(x,y)=ax^2+bxy +cy^2$ be a binary quadratic form, with $a,b,c\in\Od$. We say that $Q$ is \emph{primitive} if the ideal generated by $\{a,b,c\}$ is $\Od$. Similar to the classical theory of binary quadratic forms over $\mathbb{Z},$ the group $\Bd$ acts on the set of binary quadratic forms over $\Od$ by linear substitution, and we say that $Q$ and $Q'$ are equivalent if they belong to the same orbit under this action. The main invariant of $Q$ is its \textit{discriminant}, which is given by $D=b^2-4ac$. 

Let $B \in \Bd$ be a loxodromic element. We say that $B$ 
is \emph{primitive} if, whenever $B=A^m$ for a positive integer $m$, and $A \in \Bd,$ we have $m=1$ and $B=A$. 
% The generator $\epsilon_Q$ of the group of automorphisms of a quadratic form $Q$ turns out to be a primitive element in $\Bd$.
In the case where the quadratic form $Q(x,y)=ax^2+bxy +cy^2$ has discriminant $D\in\mathfrak{D}$, it is possible to associate a primitive loxodromic element in $\Bd$ given by 
\begin{equation}\label{eq:matrixepsilon}
\epsilon_Q=\begin{pmatrix}
\frac{t_0-bu_0}{2} & -cu_0\\
au_0 & \frac{t_0+bu_0}{2}\\
\end{pmatrix},
\end{equation}
where $(t_0,u_0)$ is a fundamental solution to $t^2-Du^2=4$. It turns out that all the primitive loxodromic elements in $\Bd$ can be obtained in this way (see \cite[Thm. 4.1]{Sar83}).

%The association $Q\mapsto\epsilon_Q$ sends equivalent quadratic forms to primitive loxodromic elements that are conjugated.

%The following proposition shows that all the primitive loxodromic elements can be obtained in this way (see \cite[Thm. 4.1]{Sar83}).

\begin{proposition}[\textbf{Sarnak}]\label{prop:forms_and_loxodromic}
The association $Q\mapsto\epsilon_Q$ is a one-to-one correspondence between the equivalence classes of primitive quadratic forms $Q$ with discriminant in $\mathfrak{D}$, and conjugacy classes of primitive loxodromic elements in $\Bd$.
\end{proposition}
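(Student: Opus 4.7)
The proof has three ingredients: the map $[Q]\mapsto[\epsilon_Q]$ is well-defined on equivalence classes, injective, and surjective. My plan would be to carry out the three checks in sequence, largely paralleling Sarnak's treatment in \cite{Sar83}.

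For well-definedness, suppose $Q' = Q\circ g$ with $g\in\Bd$. A direct computation (the $\Bd$-action intertwines stabilizers) gives $\mathrm{Stab}(Q') = g^{-1}\,\mathrm{Stab}(Q)\,g$. Since the discriminant is a $\Bd$-invariant of the form, both primitive forms $Q$ and $Q'$ share the same discriminant $D$ and hence the same fundamental Pell solution $(t_0,u_0)$; combined with Proposition \ref{prop:automorphisms}, the canonical generators of the (infinite cyclic) stabilizers are therefore related by $\epsilon_{Q'} = g^{-1}\epsilon_Q g$, so the map descends to conjugacy classes.

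For injectivity, assume $\epsilon_{Q'} = g^{-1}\epsilon_Q g$. The same computation shows that $g\cdot Q'$ has $\epsilon_Q$ as a stabilizer generator, so it suffices to verify that a primitive form is determined by its generator. Reading off the entries of $\epsilon_Q$ via Proposition \ref{prop:automorphisms},
$$a u_0 = (\epsilon_Q)_{21},\qquad -c u_0 = (\epsilon_Q)_{12},\qquad b u_0 = (\epsilon_Q)_{22}-(\epsilon_Q)_{11},\qquad t_0 = (\epsilon_Q)_{11}+(\epsilon_Q)_{22},$$
and using that $(a,b,c)=\Od$, the triple $(a,b,c)$ is recovered up to a unit, so $Q=Q'$ (and hence $Q$ and $Q'$ were originally equivalent).

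For surjectivity, let $B = \left(\begin{smallmatrix} a_1 & a_2 \\ a_3 & a_4 \end{smallmatrix}\right)\in\Bd$ be a primitive loxodromic element and set $t = a_1+a_4$. Take $u_0$ to be a greatest common divisor in $\Od$ of $a_3$, $-a_2$, and $a_4-a_1$, and define $a=a_3/u_0$, $c=-a_2/u_0$, $b=(a_4-a_1)/u_0$. Then $Q=ax^2+bxy+cy^2$ is primitive by construction, and $\det B = 1$ rewrites as $t^2 - Du_0^2 = 4$ with $D=b^2-4ac$. The loxodromic assumption forces $D$ not to be a perfect square in $\Od$ (otherwise the eigenvalues of $B$ would lie in $\kd$ with absolute value one, incompatible with $B$ being loxodromic), and $D\equiv b^2\pmod 4$ makes $D$ a valid discriminant. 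The primitivity of $B$ in $\Bd$ translates into $(t,u_0)$ being the fundamental solution: if instead $\epsilon_{t,u_0}=\epsilon_D^k$ with $k\ge 2$, then $B=\tilde\epsilon^{k}$ with $\tilde\epsilon\in\Bd$ the generator associated to $(t_0,u_0)$, contradicting primitivity. Hence $B=\epsilon_Q$.

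I expect the principal obstacle to be the surjectivity step, for two reasons: it requires extracting a well-behaved greatest common divisor in $\Od$ (which is why the sequel restricts to imaginary quadratic fields of class number one), and the passage from algebraic primitivity of $B$ in $\Bd$ to the minimality condition defining the fundamental Pell solution needs the precise tracking of how proper powers in the cyclic stabilizer interact with the matrix formula of Proposition \ref{prop:automorphisms}. The well-definedness and injectivity arguments, by contrast, are essentially formal once the stabilizer description is in hand.
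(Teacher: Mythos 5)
Note first that the paper does not prove this proposition at all: it is quoted directly from Sarnak (\cite[Thm.\ 4.1]{Sar83}), so your attempt has to be judged against the classical argument rather than an internal one. Your skeleton (well-definedness, injectivity, surjectivity via the explicit automorph) is the standard route, and your surjectivity construction is essentially right: building $Q$ from $B$ by dividing out the content of $(a_3,-a_2,a_4-a_1)$ (this is where class number one enters), deducing $t^2-Du_0^2=4$ from $\det B=1$, and excluding square discriminants because the eigenvalues of $B$ would then be units of $\Od$, hence of modulus one, is correct.

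The genuine gap is in the injectivity step, and it leaks into well-definedness and the end of surjectivity. From the entries of $\epsilon_Q$ you recover only the products $au_0$, $bu_0$, $cu_0$, so primitivity determines $(a,b,c)$ only up to a unit $\lambda$ of $\Od$, and ``up to a unit'' does not give $Q=Q'$: for a unit $\lambda$ the form $\lambda Q$ is again primitive, has the same automorphism group, and has discriminant $\lambda^2D$, which for $d=1,3$ (the case $\mathbb{Z}[i]$ is precisely Sarnak's) differs from $D$, so $\lambda Q$ is certainly not $\Bd$-equivalent to $Q$. Writing $\epsilon_Q=\tfrac{t_0}{2}\mathrm{Id}+\tfrac{u_0}{2}M_Q$ with $M_Q=\left(\begin{smallmatrix}-b&-2c\\ 2a&b\end{smallmatrix}\right)$, one checks that the canonical generator attached to $\lambda Q$ is $\epsilon_Q^{\pm1}$, the sign depending on the branch chosen for $\sqrt{\lambda^2D}$; so inequivalent forms such as $Q$ and $iQ$ can produce literally the same matrix, and even for general $d$ the case $\lambda=-1$ must be excluded by the separate observation $\epsilon_{-Q}=\epsilon_Q^{-1}\neq\epsilon_Q$, which you do not make. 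Relatedly, the fundamental solution is only defined up to $(t_0,u_0)\mapsto(-t_0,-u_0)$, so $\epsilon_Q$ is pinned down only up to sign; in well-definedness the identity $g^{-1}\epsilon_Qg=\epsilon_{Q\circ g}$ (rather than $\epsilon_{Q\circ g}^{-1}$) needs the covariance $M_{Q\circ g}=g^{-1}M_Qg$ for $\det g=1$, which you assert rather than verify; and in surjectivity the solutions are $\pm\epsilon_D^{\pm n}$, so stabilizing $Q$ only gives $B=\pm\epsilon_Q^{\pm k}$, and you must also accept the outcome $B=\epsilon_Q^{-1}=\epsilon_{-Q}$ and rule out the sign. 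For a fixed discriminant (which is all the paper later uses) these issues can be repaired, but the unit/sign/branch bookkeeping is exactly the substance of Sarnak's theorem, and as written your injectivity argument skips it.
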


\begin{remark}
For any $\alpha,\beta \in \mathcal{O}_d$, the number $D=\beta^2+4\alpha$ is the discriminant of the  binary primitive quadratic form $Q(x,y)=\alpha x^2+\beta xy- y^2.$ In particular,  any $D \in \mathfrak{D}$ corresponds to a nonempty set of equivalence classes of forms with discriminant $D$. On the other hand, if we take $\alpha=0$, then $D \notin \mathfrak{D}$, i.e. not all discriminants of primitive forms are contained in $\mathfrak{D}$.
\end{remark} 
We define the class number $h(D)$ as the number of equivalence classes of binary primitive quadratic forms with coefficients in $\Od$, and discriminant $D$. For any $x>0$ consider the finite set $\mathfrak{D}_x=\{D\in\mathfrak{D};|\epsilon_D|\leq x\}$. The main result in \cite{Sar83} is to provide an asymptotic behavior for the average of class numbers, when ordered by the size of $\epsilon_D$. More precisely (see \cite[Thm. 7.2]{Sar83}).

\begin{theorem}[\textbf{Sarnak}]\label{thm:Sarnak_growth}
There exists a constant $c_d>0$ depending only on $k_d$ such that 
$$\frac{1}{|\mathfrak{D}_x|}\sum_{D\in\mathfrak{D}_x
}h(D)=\frac{\emph{Li}(x^4)}{c_d x^2}+O(x^\gamma),$$
for any $\gamma>4/3$ as $x\rightarrow\infty,$ and $\emph{Li}(u)=\int_2^{u}\frac{1}{\log t}dt$.
\end{theorem}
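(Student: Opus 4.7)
The plan is to interpret the sum $\sum_{D\in\mathfrak{D}_x}h(D)$ geometrically as a counting function of primitive closed geodesics on the arithmetic hyperbolic $3$-orbifold $M_d=\PSL_2(\Od)\backslash\Hyp$, and then to invoke the prime geodesic theorem for $M_d$. By Proposition~\ref{prop:forms_and_loxodromic} the sum counts conjugacy classes of primitive loxodromic elements of $\Bd$ whose associated discriminant satisfies $|\epsilon_D|\le x$; by Proposition~\ref{prop:automorphisms} a natural representative of such a class is $\epsilon_Q$, whose trace is the Pell coordinate $t_0$. Combining Proposition~\ref{Pliniobreak} (applied with $n=0$) with the trace-length relation of Proposition~\ref{prop:displacement_equality}, a short calculation gives
$$e^{\ell(\epsilon_Q)}=|\epsilon_D|^2+O(1),$$
so the condition $|\epsilon_D|\le x$ is equivalent, up to a bounded shift in $T:=2\log x$, to $\ell(\epsilon_Q)\le T$. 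Hence
$$\sum_{D\in\mathfrak{D}_x}h(D)=\pi_{M_d}(T)+O(1),$$
where $\pi_{M_d}(T)$ denotes the primitive-geodesic counting function of $M_d$.

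The core analytic step is the prime geodesic theorem for $M_d$. Applying the Selberg trace formula for $\Bd$ to a smooth approximation of the characteristic function of $[0,T]$, followed by a Tauberian argument, the principal term (coming from the identity contribution together with the trivial Eisenstein pole) is $\mathrm{Li}(e^{2T})$, while the error is governed by the exceptional Laplace spectrum $\lambda_j\in(0,1)$ on $M_d$ and by continuous Eisenstein contributions from the cusps. Using the Selberg-type spectral-gap estimates for congruence subgroups of $\Bd$, one obtains
$$\pi_{M_d}(T)=\mathrm{Li}(e^{2T})+O(e^{sT})$$
for any $s>5/3$, which in the variable $x=e^{T/2}$ becomes
$$\sum_{D\in\mathfrak{D}_x}h(D)=\mathrm{Li}(x^4)+O(x^{10/3+\varepsilon}).$$

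To normalize, one needs an independent asymptotic for the denominator $|\mathfrak{D}_x|$. Parameterizing $D\in\mathfrak{D}_x$ by its fundamental Pell solution $(t_0,u_0)\in\Od\times\Od$ and using $|t_0|\asymp|\epsilon_D|$ from Proposition~\ref{Pliniobreak}, the count reduces to a lattice-point problem in $\Od\times\Od$ under the constraint $|t_0+u_0\sqrt{D}|\le 2x$. A direct volume calculation, together with control on the fibers of the map $D\mapsto(t_0,u_0)$, yields $|\mathfrak{D}_x|=c_d\,x^2+O(x)$ for an explicit constant $c_d>0$ depending only on $k_d$. Dividing the two asymptotics produces the claim with error $O(x^\gamma)$ for any $\gamma>2s-2=4/3$.

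The main obstacle is the prime geodesic theorem error term: the admissible exponent $\gamma>4/3$ is controlled exactly by the quality of the spectral-gap estimates for exceptional Laplace eigenvalues on congruence quotients of $\Bd$, and any substantial improvement would require breaking the rank-one analogue of the Selberg $1/4$-threshold, which remains a deep open problem.
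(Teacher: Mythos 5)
First, note that the paper does not prove this statement at all: Theorem~\ref{thm:Sarnak_growth} is imported verbatim from Sarnak \cite[Thm.~7.2]{Sar83}, so there is no internal proof to compare against. What you have written is, in outline, Sarnak's own route: identify $\sum_{D\in\mathfrak{D}_x}h(D)$ with a count of primitive closed geodesics on $\PSL_2(\Od)\backslash\Hyp$ via the form/loxodromic correspondence (Propositions~\ref{prop:automorphisms} and \ref{prop:forms_and_loxodromic}), apply a prime geodesic theorem with error exponent $5/3$ coming from the trace formula and control of the exceptional spectrum, and divide by an independent asymptotic for $|\mathfrak{D}_x|$. So the strategy is the right one and matches the cited source.

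As a proof, however, the sketch has real soft spots. (i) The reduction ``$\sum_{D\in\mathfrak{D}_x}h(D)=\pi_{M_d}(T)+O(1)$'' does not follow from ``a bounded shift in $T$'': shifting $T$ by a constant changes $\mathrm{Li}(e^{2T})$ by a multiplicative constant and would ruin the main term. The clean statement is exact: the eigenvalues of $\epsilon_Q$ are $\epsilon_D^{\pm1}$, so $\ell(\epsilon_Q)=2\log|\epsilon_D|$ precisely (your approximate identity $e^{\ell}=|\epsilon_D|^2+O(1)$ via Propositions~\ref{Pliniobreak} and \ref{prop:displacement_equality} is true but is the wrong tool here; if you insist on it, you must bound the number of geodesics in the resulting boundary window and absorb it into the error term). (ii) The passage from $\SL_2(\Od)$-conjugacy classes of primitive elements to primitive geodesics of the orbifold needs bookkeeping ($\pm I$, torsion, primitivity in $\SL_2$ versus $\PSL_2$); this only affects constants, but the main term has an explicit constant, so it cannot be waved away. (iii) The count $|\mathfrak{D}_x|=c_dx^2+O(x)$ is not a ``direct volume calculation'': it is a divisor-type lattice count (which $(t,u)\in\Od^2$ with $u^2\mid t^2-4$ give fundamental, non-square discriminants, with multiplicity of the fibers controlled) and is itself a substantive part of \cite{Sar83}; note that an error $O(x^{4/3+\varepsilon})$ already suffices for the stated $\gamma>4/3$. (iv) The exponent $5/3$ in the prime geodesic theorem requires knowing that exceptional eigenvalues $\lambda_j=s_j(2-s_j)$ with $s_j>5/3$ (i.e.\ $\lambda_j<5/9$) do not occur for these Bianchi groups; invoking ``Selberg-type spectral-gap estimates'' is the right idea, but this, together with the trace-formula analysis, is precisely the deep content of the result being quoted. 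In short: faithful outline of the cited proof, not a self-contained argument.
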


\section{Congruence subgroups and shortest closed geodesics}\label{sec:cong_subpgs_and_systoles}

%will prove to have a large number of pairwise no conjugated loxodromic elements with the same trace.
The aim of this section is to construct a class of subgroups of $\Bd$, such that, it is possible to establish which loxodromic elements have shortest translation length. This content is the most technical part of the article, and contains results needed to prove the main theorem in the next section.
 
For any ideal $I\subset\Od$ consider the projection map
$$\pi_I:\SL_2(\Od)\rightarrow \SL_2(\Od/I)$$
given by reduction modulo $I$. The \emph{principal congruence subgroup of level I} is given by $\Bd[I]=\ker(\pi_I).$ In general, a group $\Gamma < \Bd$ is called a \emph{congruence subgroup} if $\Gamma$ contains $\Bd[I]$ for some ideal $I$. The following lemma will be important later on. 

\begin{lemma}\label{Marfloflemma}
If $ T \in \Bd[I],$ then $\tr(T) \equiv 2~ (\mathrm{mod}  ~ I^2).$ 
\end{lemma}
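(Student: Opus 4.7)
The plan is to write $T$ explicitly as $I_2 + M$ where $M$ has entries in $I$, and then extract the desired congruence from the determinant constraint $\det(T) = 1$.

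More concretely, since $T \in \Bd[I]$ means $\pi_I(T) = I_2$, I would write
\[
T = \begin{pmatrix} 1+a & b \\ c & 1+d \end{pmatrix}, \qquad a,b,c,d \in I.
\]
The key observation is that $\tr(T) - 2 = a + d$, so the claim reduces to showing $a + d \in I^2$. This is where the determinant condition enters: expanding $\det(T) = (1+a)(1+d) - bc = 1$ gives
\[
a + d = bc - ad.
\]
The right hand side is manifestly in $I^2$, since each of the products $bc$ and $ad$ involves two factors from $I$. Hence $a+d \in I^2$, which is exactly the statement $\tr(T) \equiv 2 \pmod{I^2}$.

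I do not expect any genuine obstacle here: the proof is a direct one-line computation from the definition of $\Bd[I]$ as a kernel together with the unimodularity condition $\det T = 1$. The only thing worth flagging is that the argument is purely algebraic and uses nothing about $\Od$ beyond its ring structure, so the result would hold over any commutative ring with ideal $I$. The statement's usefulness later (in forcing large systole for principal congruence subgroups) will come from combining this trace congruence with the length-trace relation of Proposition \ref{prop:displacement_equality}, but that application is outside the scope of the lemma itself.
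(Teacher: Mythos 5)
Your proof is correct and is essentially the same as the paper's: both write $T$ as the identity plus a matrix with entries in $I$ and deduce $\tr(T)-2\in I^2$ from $\det(T)=1$ reduced modulo $I^2$. Your version just spells out the determinant expansion explicitly, which the paper leaves implicit.
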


\begin{proof}
We can write $$T=\begin{pmatrix}
1+\beta_1 & \beta_2 \\ \beta_3 & 1+\beta_4
\end{pmatrix},$$ with $\beta_i \in I$ for all $i=1,2,3,4.$ The result follows from the equation $\det(T)=1$ modulo $I^2$.
\end{proof}

%From now on, we will consider only $\kd$  with class number one, i.e. $d\in\lbrace 1,2,3,7,11,19,43,67,163\rbrace$.
Let $D\in\mathfrak{D}$. We can write $D=\beta^2+4\xi$ for some $\xi \in \Od$. For any nontrivial solution $(t,u)$ of $t^2-Du^2=4$ with $t,u \in \Od$ we consider the algebraic numbers $\frac{t-\beta u}{2}=\tau_1$ and $\frac{t+\beta u}{2}=\tau_2$. Note that $\tau_1$ and $\tau_2$ are the roots of the quadratic polynomial $$X^2-tX+(1+\xi u^2) \in \Od[X].$$ Hence, $\tau_1, \tau_2 \in \Od$. Moreover, $\tau_1-\tau_2 \equiv 0~ (\mathrm{mod} ~ u)$. If we define $\tau\in\Od/u\Od$ being the class of $\tau_1$ (equivalently $\tau _2$) modulo $u\Od$, then
$\tau^2 = 1 \mbox{ in } \Od /u \Od,$ since $\tau_1 \tau_2 = 1+\xi u^2$. Let $\mathrm{Id}$ denote the identity matrix in $\SL_2(\Od/u\Od)$. We obtain that the set $\left\{\mathrm{Id},\tau \mathrm{Id} \right\} $ is a normal subgroup of $\SL_2(\Od/u\Od)$. This allow us to define the subgroup 
% $$\overline{t}=\overline{2}\tau \mbox{ and } \tau^2 = \overline{1} \mbox{ in } \Od /u \Od,$$ 
%since $t=2\tau_1+\beta u$ and $\tau_1 \tau_2 = 1+\xi u^2$. 
 $$\Bd_\tau[u]:=\pi_{u\Od}^{-1}\left( \left\{\mathrm{Id},\tau \mathrm{Id} \right\} \right) \lhd \Bd.$$
Hence, $\Bd_\tau[u]$  contains $\Bd[u\Od]$ as a subgroup of index 2, and in particular it is a congruence subgroup.

\begin{remark}\label{rem:tau}
Note that $\tau$ depends only on $t,u$ and $D$. Indeed, if $D=\beta_1^2+4\xi_1=\beta_2^2+4\xi_2,$ then $4$ divides $\beta_1^2-\beta_2^2$. By \cite[Lem. 4.5]{Sar83} $2$ must divide $\beta_1+\beta_2$. Hence,  $ \frac{t+\beta_1 u}{2} - \frac{t-\beta_2 u}{2} \equiv 0~ (\hspace{-3mm}\mod u).$
\end{remark}

Under a suitable condition on $t$ and $u,$ we can characterize the loxodromic elements in $\Bd_\tau[u]$ having the shortest translation length. Before that, we will need the following elementary but useful claim.

\begin{lemma}\label{lem:elementary_lemma}
Let $z,w \in \mathbb{C}$ satisfying  $|z|+|w| \ge 8$ and   $|z| \ge |w|+1 $. Then $|z^2-4| \ge |w^2-4|$.
%\begin{equation*}
%|z^2-4| \ge |w^2-4|.
%\end{equation*}

\end{lemma}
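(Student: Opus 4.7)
My plan is to reduce the inequality to a statement about the moduli $a := |z|$ and $b := |w|$ alone. The crucial observation is that the two hypotheses are tailor-made to factor nicely: multiplying $a-b \geq 1$ by $a+b \geq 8$ yields
$a^2 - b^2 = (a-b)(a+b) \geq 8$.
Summing them gives $2a \geq 9$, so in particular $a > 2$.

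Given this, I would apply the (reverse) triangle inequality to each side of the desired inequality. Since $a \geq 2$, we have $|z^2 - 4| \geq |z|^2 - 4 = a^2 - 4$, while unconditionally $|w^2 - 4| \leq |w|^2 + 4 = b^2 + 4$. It therefore suffices to verify $a^2 - 4 \geq b^2 + 4$, i.e., $a^2 - b^2 \geq 8$, which is exactly the consequence of the hypotheses noted above. Chaining these three inequalities finishes the argument.

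The one thing that could in principle cause trouble is the looseness of the two triangle-inequality estimates: the lower bound $|z^2-4| \geq a^2 - 4$ is sharp only when $z$ is real and positive, whereas the upper bound $|w^2 - 4| \leq b^2 + 4$ is sharp only when $w$ is purely imaginary, so they are never simultaneously tight. However, the gap $a^2 - b^2 \geq 8$ provided by the two hypotheses is exactly the worst-case loss, so it absorbs the slack comfortably and no finer analysis (for instance one that keeps track of the arguments of $z$ and $w$) is needed.
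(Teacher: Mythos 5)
Your proof is correct and follows essentially the same route as the paper: both multiply the two hypotheses to get $|z|^2-|w|^2\ge 8$, hence $|z|^2-4\ge |w|^2+4$, and then conclude via the (reverse) triangle inequality $|z^2-4|\ge |z|^2-4$ and $|w^2-4|\le |w|^2+4$. No gaps; the remark about $a\ge 2$ is not even needed, since the reverse triangle inequality gives the lower bound unconditionally.
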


\begin{proof}
The conditions imply that $|z|^2-4\geq|w|^2+4$. The result is then a consequence of the triangle inequality.
\end{proof}

\begin{proposition}\label{systolestimate}
Suppose that $4<|t|<\frac{4}{9}|u|^2$. Then any loxodromic element $B\in\Bd_\tau[u]$ satisfies $$4\cosh(\ell(B))\geq |t|^2+|t^2-4|.$$

In particular, if $\tr(B)=t$ then $B$ determines a closed geodesic in $\Bd_\tau[u]\backslash\Hyp$ of shortest length.
\end{proposition}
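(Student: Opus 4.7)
The plan is to establish a sharp mod-$u^2$ congruence on $\tr(B)$, then combine it with the hypothesis $|u|^2>\tfrac{9}{4}|t|$ via Lemma \ref{lem:elementary_lemma} and Proposition \ref{prop:displacement_equality}.

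First I would show that every $B\in\Bd_\tau[u]$ satisfies $\tr(B)\equiv 2\pmod{u^2\Od}$ when $B\in\Bd[u\Od]$, and $\tr(B)\equiv t\pmod{u^2\Od}$ when $B\equiv\tau I\pmod{u\Od}$. The first case is Lemma \ref{Marfloflemma} applied to $I=u\Od$. For the second, I would take as anchor the element $\epsilon_Q$ from Proposition \ref{prop:automorphisms} associated to a primitive form of discriminant $D$ with $(t,u)$ as fundamental solution; its explicit shape shows $\epsilon_Q\equiv\tau I\pmod{u\Od}$ and $\tr(\epsilon_Q)=t$. Writing $B=\epsilon_Q+uM$ with $M\in M_2(\Od)$ and expanding
$$\det(\epsilon_Q+uM)=\det(\epsilon_Q)+u\,\tr(\mathrm{adj}(\epsilon_Q)M)+u^2\det(M)=1,$$
one gets $\tr(\mathrm{adj}(\epsilon_Q)M)\equiv 0\pmod{u\Od}$. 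Since $\mathrm{adj}(\epsilon_Q)\equiv\tau I\pmod{u\Od}$ and $\tau$ is a unit mod $u\Od$ (because $\tau^2\equiv 1$), this forces $\tr(M)\equiv 0\pmod{u\Od}$, hence $\tr(B)-t=u\,\tr(M)\in u^2\Od$.

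Now set $s:=\tr(B)$ and assume $s\neq t$. Loxodromicity excludes $s=\pm 2$, so the congruences above give either $|s-2|\geq|u|^2$ or $|s-t|\geq|u|^2$; combined with $|u|^2>\tfrac{9}{4}|t|$ and $|t|>4$, the triangle inequality yields in both cases
$$|s|\geq|u|^2-|t|>\tfrac{5}{4}|t|>|t|+1,\qquad |s|+|t|>2|t|+1>8.$$
By Lemma \ref{lem:elementary_lemma} with $z=s$ and $w=t$, $|s^2-4|\geq|t^2-4|$; together with $|s|^2>|t|^2$ and Proposition \ref{prop:displacement_equality}, this gives
$$4\cosh\ell(B)=|s|^2+|s^2-4|\geq|t|^2+|t^2-4|.$$
The case $s=t$ gives equality, proving the \emph{in particular} claim.

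The main obstacle is upgrading the elementary mod-$u$ trace congruence to a mod-$u^2$ one in the nontrivial coset: only the full quadratic factor $|u|^2$ in $|s-t|\geq|u|^2$ is strong enough to beat the $|t|^2$ term on the right-hand side. The determinant expansion is the natural tool precisely because $\mathrm{adj}(\epsilon_Q)$ reduces modulo $u\Od$ to a scalar matrix, which turns the linear-in-$u$ constraint $\tr(\mathrm{adj}(\epsilon_Q)M)\equiv 0\pmod{u\Od}$ into $\tau\cdot\tr(M)\equiv 0\pmod{u\Od}$ and thereby produces the extra power of $u$.
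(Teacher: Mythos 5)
Your proof is correct and follows essentially the same route as the paper's: split $\Bd_\tau[u]$ into the two cosets of $\Bd[u\Od]$, prove $\tr(B)\equiv 2$ resp.\ $\tr(B)\equiv t \pmod{u^2\Od}$, and conclude via Lemma \ref{lem:elementary_lemma} and Proposition \ref{prop:displacement_equality}. The only difference is cosmetic: you derive the congruence in the nontrivial coset from the determinant expansion of $\epsilon_Q+uM$, whereas the paper multiplies the anchor $A$ by a generic element of $\Bd[u\Od]$ and computes the trace of the product directly --- both hinge on the same fact that $\det=1$ forces the residual trace to vanish modulo $u$.
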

\begin{proof}
Consider a primitive quadratic form $Q=ax^2+bxy+cy^2$ over $\Od$  with discriminant $D=b^2-4ac$. Since $\Bd_\tau[u]$ has index $2$ in $\Bd[u\Od]$ we have $$\Bd_\tau[u]=\Bd[u\Od] \cup T \cdot \Bd[u\Od]$$ for any $T \in \Bd_\tau[u] \setminus \Bd[u\Od].$ Let $B\in\Bd_\tau[u]$ be a non-parabolic element. Firstly, suppose that $B \in \Bd[u\Od]$. By Lemma \ref{Marfloflemma} $\tr(B)=2+\xi u^2$ for some $\xi \neq 0$. We claim that in this case
$$4\cosh(\ell(B)) > |t|^2+|t^2-4|.$$ 
Indeed, by hypothesis
\begin{align}\label{incongruence2}
|\tr(B)|  \ge |u|^2-2 > 2|t|-2 >|t|+1,
\end{align}
and $|\tr(B)|+|t| \geq 2|t| >8.$
%\begin{align*}
%|\tr(B)|+|t| \geq 2|t| >8.
%\end{align*}
Therefore, we can apply Proposition \ref{prop:displacement_equality} and Lemma \ref{lem:elementary_lemma}. On the other hand, we have already seen that the matrix $$A=\begin{pmatrix}
\frac{t-bu}{2} & -cu \\ au & \frac{t+bu}{2}
\end{pmatrix}$$
lies in $\Bd$.
By the definition of $\tau,$ we have that $A \in \Bd_\tau[u]$ and $A \notin \Bd[u\Od]$ by \eqref{incongruence2}, since $\tr(A)=t$. Hence, we only need to estimate the displacement of any element of the form $AB$ with $B \in \Bd[u\Od],$ $B \neq \rm{Id}$. We can write \\ $$B=\begin{pmatrix}
1+u\theta_1 & u\theta_2 \\ u \theta_3 & 1+u\theta_4
\end{pmatrix}$$ with $\theta_i \in \Od$ for all $i=1,2,3,4.$ Therefore, modulo $u^2\Od$ we have
$$\tr(AB) \equiv t + u(\theta_1+\theta_4)\frac{t-bu}{2} \equiv t ~ (\hspace{-4mm}\mod u^2),$$ 
since \textbf{$u(\theta_1+\theta_4) \equiv 0 ~(\mathrm{mod} ~ u^2)$} by Lemma \ref{Marfloflemma}. We write $\tr(AB)=t+u^2\eta$ for some $\eta \in \Od$. If $\eta=0$, then $\cosh(\ell(AB))=|t|^2+|t^2-4|$ by Proposition \ref{prop:displacement_equality}. If $\eta \neq 0$, then $|\eta|\geq 1$ as $\eta$ is a quadratic imaginary algebraic integer, and
\begin{align} \label{030220.5}
 |\tr(AB)| \geq |u^2||\eta|-|t|> \frac{9}{4}|t|-|t| > |t|+1 \mbox{ for } |t|>4,
\end{align} 
and we can apply again Lemma \ref{lem:elementary_lemma} in order to conclude that 
$$4\cosh(\ell(AB))=|\tr(AB)|^2+|\tr(AB)^2-4| > |t|^2+|t^2-4|.\qedhere$$
\end{proof}
Let $t,u,D$ be quadratic imaginary algebraic integers satisfying the Pell's type equation \eqref{eq:Pell_Type}, with $D\in\mathfrak{D}$. We provide now sufficient conditions for $t,u$ to satisfy the hypothesis of Proposition \ref{systolestimate}. The following inequalities are immediate from \eqref{eq:Pell_Type}, and  will be useful in the sequel: 

%As an immediate corollary we have the following inequalities which will be useful in the sequel:
\begin{equation}\label{280120.1}
 \dfrac{|t|^2-4}{|D|}\leq |u|^2 \leq \dfrac{|t|^2+4}{|D|},
\end{equation}
%\begin{equation}\label{280120.2}
% |u|^2 \ge \dfrac{|t|^2-4}{|D|}.
%\end{equation}
Moreover, if $u \neq 0,$ then 
\begin{equation}\label{280120.3}
 |t|^2 \ge |D|-4.
\end{equation}

\begin{lemma}\label{lemmaforsystole}
Let $D\in\mathfrak{D}$, and $\epsilon_D=\frac{1}{2}(t_0+u_0\sqrt{D})$ be a fundamental solution of $t^2-u^2D=4$. Then, if $|D|\geq 52$ and $\epsilon_D^{n+1}=\frac{1}{2}(t_n+u_n\sqrt{D})$ we have $|t_n|<\frac{4}{9}|u_n|^2$ for $n\geq 2$.
\end{lemma}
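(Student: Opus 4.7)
The plan is to reduce the desired inequality $|t_n|<\tfrac{4}{9}|u_n|^2$ to a polynomial lower bound on $|t_n|$ in terms of $|D|$, and then exhibit such a bound for $n\ge 2$ using Proposition \ref{Pliniobreak} together with the elementary inequality \eqref{280120.3}.

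First I would invoke \eqref{280120.2}, which reduces the claim $9|t_n|<4|u_n|^2$ to the sufficient condition
\begin{equation*}
9|D|\,|t_n|<4\bigl(|t_n|^2-4\bigr),
\end{equation*}
that is, $4|t_n|^2-9|D||t_n|-16>0$. Solving this quadratic in $|t_n|$, it suffices that $|t_n|$ exceed a quantity behaving like $\tfrac{9}{4}|D|$ as $|D|\to\infty$; for concreteness, any bound of the form $|t_n|\ge 3|D|$ will do once $|D|$ is large enough.

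Next I would estimate $|\epsilon_D|$ from below. Applying Proposition \ref{Pliniobreak} with $n=0$ gives $|\epsilon_D|^2>|t_0|^2-3$, and since $u_0\ne 0$ the inequality \eqref{280120.3} yields $|t_0|^2\ge|D|-4$, so
\begin{equation*}
|\epsilon_D|^2>|D|-7.
\end{equation*}
Applying Proposition \ref{Pliniobreak} once more, now for arbitrary $n$, one obtains
\begin{equation*}
|t_n|^2>|\epsilon_D|^{2(n+1)}-3\ge(|D|-7)^{n+1}-3.
\end{equation*}

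For $n\ge 2$ this lower bound is at least $(|D|-7)^3-3$, which grows cubically in $|D|$ and so eventually exceeds $9|D|^2$; this is exactly the requirement $|t_n|\ge 3|D|$ from the first step, completing the argument for all sufficiently large $|D|$. I do not anticipate a genuine obstacle, as the argument is a chain of elementary estimates; the only conceptual point is that the hypothesis $n\ge 2$ is used precisely where needed, since the exponent $n+1$ must be at least $3$ in order for the lower bound on $|t_n|^2$ to dominate the term linear in $|D|\cdot|t_n|$ coming from \eqref{280120.2}, and the same strategy would fail for $n=1$.
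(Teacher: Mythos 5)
Your proposal is correct and follows essentially the same route as the paper: reduce the claim via \eqref{280120.2} to the quadratic inequality $4(|t_n|^2-4)>9|D||t_n|$, get $|\epsilon_D|^2>|D|-7$ from Proposition \ref{Pliniobreak} (case $n=0$) together with \eqref{280120.3}, and then use Proposition \ref{Pliniobreak} again to show $|t_n|^2$ grows at least cubically in $|D|$ for $n\ge 2$, which dominates the quadratic threshold. The only differences are in the explicit constants (the paper passes through $|D|-7>\tfrac{1}{2}|D|$ and $|t_n|>\tfrac{5}{2}|D|$, you use $|t_n|\ge 3|D|$), which is immaterial.
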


\begin{proof}
By the left-hand side of \eqref{280120.1}, we have $|u_n|^2 \ge \frac{|t_n|^2-4}{|D|}$. Hence, it is sufficient to prove that for $n \geq 2$
\begin{equation}\label{quadraticfunction}
 \frac{|t_n|^2-4}{|D|}>\frac{9}{4}|t_n|.
\end{equation}

By \eqref{280120.3}, we have $|t_0|^2 \ge |D|-4$ since $u_0 \neq 0.$ Moreover, by Proposition \ref{Pliniobreak} for $n=0$ we obtain 
\begin{equation*}
|\varepsilon_D|^2 \ge |D|-7 > \frac{1}{2}|D|.
\end{equation*}

Together with Proposition \ref{Pliniobreak} we get for $n \ge 2$ that  
\begin{align} \label{growthoftn} 
|t_n|^2 & >  \left(\frac{1}{2}|D| \right)^{(n+1)}-3 > 25 \left(\frac{1}{2}|D| \right)^2,
\end{align}
where the last inequality holds for $|D|\geq 52$ since the function $x^3-25x^2-3$ is positive for $x\geq 26$. Now we are able to ensure \eqref{quadraticfunction} for $n \ge 2$. Indeed, by \eqref{growthoftn} $$|t_n| > \frac{5}{2}|D|>1,$$  for any $n \ge 2$. Hence, 
$$|t_n|^2-4>\frac{5}{2}|D||t_n|-4 > \frac{9}{4}|D||t_n|.$$\qedhere         
\end{proof}
In order to apply Proposition \ref{systolestimate}, it is natural to define the number

%\textbf{Since we want to determine the loxodromic elements with the shortest translation length, it is natural to define the following number:}
 $$m(\epsilon_D)=\min \left\lbrace n\ge 0: |t_n|<\frac{4}{9}|u_n|^2 \right\rbrace.$$ 

By the previous lemma $m(\epsilon_D)\in\{0, 1, 2\}$ whenever $|D|\geq 52$. The following lemma proves that we can control the size of $u_{m(\epsilon_D)}$ by $|D|$ when $m(\epsilon_D)\neq 0$.  

\begin{lemma}\label{coffeelemma}
Let $D\in\mathfrak{D}$ with $|D|\geq 52$. If $m(\epsilon_D)\neq 0,$ then $|u_{m(\epsilon_D)}|\leq 30|D|^{3/2}$.
\end{lemma}

\begin{proof}
 By definition, $t_n$ and $u_n$ satisfy $\epsilon_D^{n+1}=\frac{1}{2}(t_n+u_n\sqrt{D})$. Hence, we can write 
\begin{equation}\label{recurrence}
 u_1=u_0t_0 \hspace{0.5cm} \mbox{ and } \hspace{0.5cm} u_2=\frac{1}{2}(u_0t_1+u_1t_0).
\end{equation}

Note that if $t^2-Du^2=4$ with $u \neq 0$ and $|t| \ge \frac{4}{9}|u|^2,$ then
\begin{equation} \label{upperboundt}
 |t| \leq \frac{9}{2} |D|.
\end{equation}

Indeed, by the argument used in Lemma \ref{lemmaforsystole}, if $t, u$ satisfy $|t| \ge \frac{4}{9}|u|^2$ then \eqref{quadraticfunction} does not hold for $t_n=t$. By the left-hand side of \eqref{280120.1} we have
$$|t| \geq \frac{4}{9}|u|^2 \ge \frac{4}{9}\left( \frac{|t|^2-4}{|D|}\right) \Leftrightarrow |t|^2-\frac{9}{4}|D||t|-4 \le 0. $$

Therefore, $$|t| \le \frac{1}{2}\left(\frac{9}{4}|D|+\sqrt{\left(\frac{9}{4}|D|\right)^2+16} \right).$$

Since we can assume that $16 \le 3\left(\frac{9}{4}|D|\right)^2$, we obtain \eqref{upperboundt}.
Suppose now that $m(\varepsilon_D)=1$. By hypothesis, $|t_0|^2 \ge |D|-4 \ge 4,$ and we can apply the right-hand side of \eqref{280120.1} for $u_0,$ \eqref{upperboundt} for $t_0$  and \eqref{recurrence}  to obtain that $$|u_1|^2=|u_0|^2|t_0|^2 \leq \frac{(|t_0|^2+4)}{|D|}|t_0|^2 \leq \frac{2|t_0|^4}{|D|} \le  2\left( \frac{9}{2} \right)^4|D|^3. $$

Hence,
$$|u_1| \le 30|D|^\frac{3}{2} \hspace{0.3cm} \mbox{ if } \hspace{0.3cm} m(\varepsilon_D)=1.$$

Analogously, if $m(\varepsilon_D)=2,$ together with $|t_i|^2\geq 4$ for $i=0,1,$ we can apply the right-hand side of \eqref{280120.1} for $u_0$ and $u_1,$ \eqref{upperboundt} for $t_0$ and $t_1$.  By \eqref{recurrence} we have,

$$|u_2| \leq \frac{1}{2}\left(\frac{\sqrt{2}|t_0||t_1|}{\sqrt{|D|}}+\frac{\sqrt{2}|t_1||t_0|}{\sqrt{|D|}} \right) \leq \sqrt{2}\left(\frac{9}{2}\right)^2|D|^\frac{3}{2}$$

Therefore,
$$|u_2| \le 30|D|^\frac{3}{2} \hspace{0.3cm} \mbox{ if } \hspace{0.3cm} m(\varepsilon_D)=2.$$\end{proof}

\section{Proof of the main theorem} \label{sec:proofmainthm}

We can now prove the main result of this article (see Theorem \ref{thm:main_theorem}). Before that, let us explain a notation that will be used through this section. We will say that two positive functions $f,g$ satisfy the relation $f(x)\gtrsim g(x)$ if for any $\epsilon>0$ there exists $x_0=x_0(\epsilon)$ such that $f(x)\geq (1-\epsilon)g(x)$ for $x>x_0$. 

 We will start by defining the manifolds $M_i$. Let $d>0$ be a square free rational integer such that $\kd$ has class number one. Recall that $h(D)$ is defined as the number of inequivalent binary quadratic forms over $\Od$ with discriminant equal to $D$ (see Section \ref{sec:quadforms}).  By Theorem \ref{thm:Sarnak_growth} there exists a sequence $D_i\in\mathfrak{D}$ with $|\epsilon_{D_{i}}|\rightarrow\infty,$ and a constant $c_d>0$
such that 
\begin{equation}\label{eq:Sarnak_avarage}
h(D_i)  \gtrsim c_d \frac{|\epsilon_{D_i}|^2}{\log(|\epsilon_{D_i}|)}.
\end{equation}

Since $h(D_i)\rightarrow\infty$  we have that $|D_i|\rightarrow\infty,$ and we can assume that $D_i$ satisfies the condition of Lemma \ref{lemmaforsystole}. For any $i,$ consider a fundamental solution $$\epsilon_{D_{i}}=\frac{1}{2}(t_{0}(i)+u_{0}(i)\sqrt{D_i})$$ of the Pell's type equation $x^2-D_iy^2=4$ over $\Od\times\Od$. We define the numbers
\begin{align*}
m_i&:=m(\epsilon_{D_i}), \hspace{0.5cm} u_i:=u_{m_i}, \hspace{0.5cm} t_i:=t_{m_i},
\hspace{0.5cm}
\end{align*}
as in Section \ref{sec:cong_subpgs_and_systoles}. From $u_i, t_i, D_i$ we have constructed a class $\tau_i\in\Od/u_i\Od$ (see Remark \ref{rem:tau}), and the corresponding group $$\Gamma_i=\Bd_{\tau_i}[u_i].$$

 By Proposition \ref{Pliniobreak}, \eqref{incongruence2}, and \eqref{030220.5}, whenever $|\epsilon_{D_i}|$ is large enough, any non parabolic element in $\Gamma_i$ is loxodromic. This implies that the group $\Gamma_i$ is torsion free, and $M_i=\Gamma_i\backslash\Hyp$ is a  hyperbolic $3$-manifold of finite volume.

\begin{proposition}\label{prop:many_systoles_1}
The sequence of hyperbolic 3-manifolds $\{M_i\}$ satisfies $$\K(M_i)\geq h(D_i).$$
\end{proposition}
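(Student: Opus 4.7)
The plan is to define, for each equivalence class of primitive binary quadratic forms $Q=ax^2+bxy+cy^2$ over $\Od$ of discriminant $D_i$, a loxodromic element $A_Q \in \Gamma_i$ that realizes the systole of $M_i$, and then show that the $h(D_i)$ form classes yield pairwise non-conjugate elements in $\bar\Gamma_i$. Concretely, I let $\epsilon_Q \in \Bd$ be the primitive generator of the automorphism group of $Q$ supplied by Proposition \ref{prop:automorphisms} and set $A_Q := \epsilon_Q^{m_i+1}$. The group isomorphism in the proof of Proposition \ref{prop:automorphisms} identifies $A_Q$ with the Pell solution $(t_i,u_i)$; in particular $\tr(A_Q)=t_i$, and $A_Q$ has the matrix shape given in Proposition \ref{prop:automorphisms} with $(t_0,u_0)$ replaced by $(t_i,u_i)$.

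Two preliminary checks are routine. To see $A_Q \in \Gamma_i = \Bd_{\tau_i}[u_i]$, I reduce this matrix modulo $u_i\Od$: the off-diagonal entries vanish, while each diagonal entry reduces to $t_i/2$, which by the very definition of $\tau_i$ in Section \ref{sec:cong_subpgs_and_systoles} (applied with $\beta=b$ and $\xi=-ac$) is congruent to $\tau_i$ modulo $u_i$. Hence $A_Q \equiv \tau_i\mathrm{Id} \pmod{u_i\Od}$. That $A_Q$ realizes the systole then follows by combining Propositions \ref{prop:displacement_equality} and \ref{systolestimate}: the former gives $4\cosh(\ell(A_Q))=|t_i|^2+|t_i^2-4|$ from $\tr(A_Q)=t_i$, while the latter (whose hypothesis $4<|t_i|<\tfrac{4}{9}|u_i|^2$ holds for $i$ large, by Lemma \ref{lemmaforsystole} together with $|\epsilon_{D_i}|\to\infty$) shows that this value is the minimum of $4\cosh(\ell(B))$ over all loxodromic $B \in \Gamma_i$.

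The main step, and the principal obstacle, is injectivity on conjugacy classes: if $Q\not\sim Q'$ then $A_Q$ and $A_{Q'}$ are not conjugate in $\bar\Gamma_i$. Assuming otherwise, a conjugation in $\bar\Gamma_i$ lifts to $X \in \Bd$ with $XA_QX^{-1}=\pm A_{Q'}$. Setting $Y := X\epsilon_QX^{-1}$, both $Y$ and $\epsilon_{Q'}$ are primitive loxodromic elements of $\Bd$ satisfying $Y^{m_i+1}=\pm\epsilon_{Q'}^{m_i+1}$. Since the centralizer of a loxodromic element inside the discrete group $\Bd$ is cyclic up to $\pm\mathrm{Id}$---generated by a single primitive loxodromic element---both $Y$ and $\epsilon_{Q'}$ lie in this cyclic subgroup, and comparison of $(m_i+1)$-th powers forces $Y=\pm\epsilon_{Q'}$. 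Both $+\epsilon_{Q'}$ and $-\epsilon_{Q'}$ stabilize the same axis and thus belong to the automorphism group of $Q'$, so Proposition \ref{prop:forms_and_loxodromic} applied to the conjugate pair $(\epsilon_Q,\pm\epsilon_{Q'})$ yields $Q \sim Q'$, a contradiction. Counting over the $h(D_i)$ form classes gives $\K(M_i)\geq h(D_i)$. The delicate point throughout is the $\pm\mathrm{Id}$ bookkeeping when descending from $\Bd$ to $\PSL_2(\mathbb{C})$, which is the obstacle; it is resolved by the fact that replacing $\epsilon_{Q'}$ by $-\epsilon_{Q'}$ does not alter the equivalence class of the associated form.
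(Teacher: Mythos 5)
Your proof is correct and follows the same overall strategy as the paper: use Proposition \ref{prop:forms_and_loxodromic} to get $h(D_i)$ pairwise nonconjugate primitive loxodromic elements of trace $t_0(i)$ from the inequivalent primitive forms of discriminant $D_i$, raise them to the power $m_i+1$ to land in $\Gamma_i$ with trace $t_i$, and invoke Proposition \ref{systolestimate} (your verification that $\epsilon_Q^{m_i+1}\equiv \tau_i\,\mathrm{Id} \bmod u_i\Od$ is exactly the paper's, carried out inside the proof of Proposition \ref{systolestimate}). The one place you genuinely diverge is the non-conjugacy-of-powers step. The paper argues via Cayley--Hamilton: since $A_j^2=t_0(i)A_j-\mathrm{Id}$, one has $A_j^{m_i+1}=\alpha A_j-\beta\,\mathrm{Id}$ with $\alpha,\beta$ depending only on $t_0(i)$ and $\alpha\neq 0$ (else the power would be central), so any conjugation carrying $A_k^{m_i+1}$ to $A_j^{m_i+1}$ already carries $A_k$ to $A_j$. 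This is more elementary than your route through the structure of the centralizer of a loxodromic element in $\Bd$, which you assert without proof; that structural fact is true, and the paper does use it, but only later (to bound the isotropy groups $(G_i)_{\gamma_j}$ in the proof of the main theorem), so the paper's version of this proposition does not depend on it. On the other hand, you are more careful than the paper about the $\pm\mathrm{Id}$ bookkeeping when descending to $\PSL_2(\mathbb{C})$; note, though, that the sign case can be dispatched immediately by traces -- $X\epsilon_Q X^{-1}=-\epsilon_{Q'}$ would force $t_0(i)=-t_0(i)$, impossible since $|t_0(i)|>4$ -- which avoids the discussion of whether $-\epsilon_{Q'}$ represents the same form class.
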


\begin{proof}
By Lemma \ref{lemmaforsystole} the pair $t_i, u_i$ satisfies the hypothesis of  Proposition \ref{systolestimate} for any $i$. Hence, the elements in $\Gamma_i$ with trace equal to $t_i$ induce closed geodesics of length equal to the systole of $M_i$. It is then enough to prove that there exist at least $h(D_i)$ pairwise nonconjugated loxodromic elements in $\Gamma_i$, with trace equal to $t_i$

%In order to have a large number of closed geodesics in $M_i$ of shortest length, it is enough to guarantee the existence of a large number of elements of trace $t_i$ in $\Gamma_i$ no two of which are conjugate within $\Gamma_i$ (see Proposition \ref{systolestimate}). 

By definition, there exist $h(D_{i})$ no pairwise equivalent primitive quadratic forms $Q_1, Q_2,\ldots,Q_{h(D_{i})}$ of discriminant $D_i$. By Proposition \ref{prop:forms_and_loxodromic}, these quadratic forms correspond to $\epsilon_{Q_1}=A_1,\ldots, \epsilon_{Q_{h(D_{i})}}=A_{h_{(D_{i})}}$ no pairwise conjugated primitive matrices in $\Bd$ with $\tr(A_j)=t_0(i),$ for any $j=1,\ldots,h(D_i)$ (see \eqref{eq:matrixepsilon}). The explicit correspondence implies that $A_j^{m_i+1}\in\Gamma_i$. Moreover $\tr(A_j^{m_i+1})=t_i$ and then   $A_j^{m_i+1}$ induces a closed geodesic $\gamma_j$ of $M_i$ of shortest length for any $j$. We claim that the powers $A_j^{m_i+1}$ are pairwise nonconjugated elements in $\Gamma_i$. In fact, the matrices $A_j$ satisfy the characteristic equation $$X^2-t_0(i)X+1=0,$$
and then $A_j^{m_i+1}$ can be written as a linear polynomial on $A_j$, whose coefficients depend only on $t_0(i)$. This implies that $A_j^{m_i+1}$ would be conjugated in $\Gamma_i$ to $A_k^{m_i+1}$ if and only if $A_j$ and $A_k$ were conjugated in $\Gamma_i$. \qedhere
%This proves the proposition. 
\end{proof}

We can go a step forward to produce more closed geodesics of shortest length. Since $\Gamma_{i}$ is a normal subgroup of $\SL(\Od)$, the group $G_i=\SL(\Od)/\Gamma_{i}$ is a subgroup of isometries of $M_i$. The main idea now is to consider the action of $G_{i}$ on the set of closed geodesics of $M_i$.
% The main idea to is contained in the following.

\begin{lemma}\label{lem:orbits}
Let $\gamma_1,\ldots,\gamma_{h(D_i)}$ the closed geodesics of shortest length in $M_i$ obtained in Proposition \ref{prop:many_systoles_1}. Then the orbits $G_i\cdot\gamma_j,$ $j=1,\ldots,h(D_i)$ are disjoint.

\end{lemma}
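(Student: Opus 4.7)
The approach is by contradiction. Suppose $g \in G_i$ satisfies $g \cdot \gamma_j = \gamma_k$ for some $j \neq k$. The goal is to exhibit $A_j$ and $A_k^{\pm 1}$ as conjugate inside $\Bd$, contradicting the fact that Proposition \ref{prop:forms_and_loxodromic} supplied them as representatives of distinct primitive $\Bd$-conjugacy classes.

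First I would pass to the level of $\Hyp$: lift $g$ to $\tilde{g} \in \mathrm{PSL}_2(\mathbb{C})$ normalizing $\overline{\Gamma_i}$. Since $\tilde{g}$ sends a lift of $\gamma_j$ to a lift of $\gamma_k$, after post-multiplication by a suitable element of $\overline{\Gamma_i}$ one may assume $\tilde{g}$ maps the axis of $A_j^{m_i+1}$ to the axis of $A_k^{m_i+1}$. The conjugate $\tilde{g} A_j^{m_i+1} \tilde{g}^{-1}$ then has the same axis, translation length $\sys(M_i)$, and trace $t_i$ as $A_k^{m_i+1}$. Comparing complex translation lengths via \eqref{eq:trace_length_1}, this forces $\tilde{g} A_j^{m_i+1} \tilde{g}^{-1} = A_k^{\pm(m_i+1)}$ in $\mathrm{PSL}_2(\mathbb{C})$.

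Next I would descend this identity to the primitive level using the Cayley--Hamilton argument already invoked in the proof of Proposition \ref{prop:many_systoles_1}. Since every $A \in \{A_1,\ldots,A_{h(D_i)}\}$ satisfies $A^2 = t_0(i) A - I$, one can express $A^{m_i+1} = \lambda A + \mu I$ for scalars $\lambda,\mu$ depending only on $t_0(i)$ with $\lambda \neq 0$ (this requires $m_i \geq 1$; the case $m_i = 0$ is trivial). Substituting both sides of $\tilde{g} A_j^{m_i+1} \tilde{g}^{-1} = A_k^{\pm(m_i+1)}$ and cancelling the constant term yields $\tilde{g} A_j \tilde{g}^{-1} = A_k^{\pm 1}$ in $\mathrm{PSL}_2(\mathbb{C})$.

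The main obstacle is the last step: showing that the normalizer $N(\overline{\Gamma_i}) \subset \mathrm{PSL}_2(\mathbb{C})$ lies inside $\overline{\Bd}$, which would place $\tilde{g}$ inside $\Bd$ (up to sign) and upgrade the conjugacy above to a genuine $\Bd$-conjugacy between $A_j$ and $A_k^{\pm 1}$. For the class-number-one fields under consideration this is standard: $\overline{\Gamma_i}$ is a congruence subgroup of the Bianchi group $\overline{\Bd}$, which is maximal in its commensurability class for these $d$, so the normalizer in $\mathrm{PSL}_2(\mathbb{C})$ of any finite-index subgroup lies inside $\overline{\Bd}$ itself. Once this is in place, the relation $A_j \sim_\Bd A_k^{\pm 1}$ contradicts the one-to-one correspondence of Proposition \ref{prop:forms_and_loxodromic} (choosing the $A_j$ so that the family $\{[A_j],[A_j^{-1}]\}_j$ also contains $h(D_i)$ distinct classes, or else absorbing an inconsequential factor of two). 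A secondary point to verify is that the index-two passage from $\Bd[u_i\Od]$ to $\Gamma_i = \Bd_{\tau_i}[u_i]$ does not enlarge the normalizer, which follows since $\{\mathrm{Id}, \tau_i \mathrm{Id}\}$ is central in $\mathrm{SL}_2(\Od/u_i\Od)$.
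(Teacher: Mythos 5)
Your strategy is the same as the paper's: identify the $G_i$-action on closed geodesics with conjugation, and then use the Cayley--Hamilton descent from $A_j^{m_i+1}$ to $A_j$ (the argument already used in Proposition \ref{prop:many_systoles_1}) to contradict the fact that the $A_j$ lie in distinct $\Bd$-conjugacy classes by Proposition \ref{prop:forms_and_loxodromic}. The paper's proof is exactly this in three lines; it simply \emph{asserts} that the action of $G_i$ translates into conjugation by elements of $\Bd$. So you have correctly isolated the step that carries all the content, namely the inclusion $N(\overline{\Gamma_i})\subseteq\overline{\Bd}$ of normalizers in $\PSL_2(\mathbb{C})$; the rest of your write-up (matching oriented axes and traces to force $\tilde g A_j^{m_i+1}\tilde g^{-1}=A_k^{m_i+1}$, and the $\lambda\neq 0$ check in the descent, using $\lambda=1$ or $t_0(i)$ or $t_0(i)^2-1$ for $m_i=0,1,2$) is sound.

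The problem is that your justification of that key inclusion is false. Bianchi groups are \emph{not} maximal in their commensurability class: the image of $\mathrm{GL}_2(\Od)$ in $\PSL_2(\mathbb{C})$ (divide by a square root of the determinant) contains $\overline{\Bd}$ with index $|\Od^{\times}/(\Od^{\times})^{2}|=2$, and it normalizes every $\Bd[I]$ and every $\Bd_{\tau}[u]$ --- conjugation by $\mathrm{diag}(1,-1)$ just flips the signs of the off-diagonal entries, hence preserves $\SL_2(\Od)$, commutes with reduction mod $u$, and fixes scalar matrices, while its image $\mathrm{diag}(-i,i)$ lies in $\overline{\Bd}$ only when $i\in\Od$. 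Moreover, even if $\overline{\Bd}$ were maximal this would not control the normalizer of a finite-index subgroup: the Fricke involution normalizes $\Gamma_0(p)\le\PSL_2(\mathbb{Z})$ without belonging to $\PSL_2(\mathbb{Z})$, which is maximal. So the step ``$N(\overline{\Gamma_i})\subseteq\overline{\Bd}$, hence $\tilde g$ gives a $\Bd$-conjugacy'' does not go through as you argue it; a correct treatment has to either determine $N(\overline{\Gamma_i})$ explicitly or check that the extra normalizing elements (such as the one above, which sends $\epsilon_Q$ to $\epsilon_{Q'}$ with $Q'=-ax^2+bxy-cy^2$ of the same discriminant) permute the classes $[A_j^{m_i+1}]$ without collapsing the count by more than a bounded factor. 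To be fair, the paper's own proof does not address this point either, but since your proposal explicitly rests the argument on a maximality claim that is not true, the gap is yours to close.
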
 

\begin{proof}
Indeed, the action of $G_i$ in the set of closed geodesics translates to the action of $\Bd$ by conjugation on the conjugacy classes of loxodromic elements of $\Gamma_i$. Hence, if the closed geodesic $\gamma_j$ lies in the same orbit of $\gamma_k,$ then $A_j^{m_i+1}=C(A_k^{^{m_i+1}})C^{-1}$ for some $C \in \Bd$. However, as we have seen in the proof of Proposition \ref{prop:many_systoles_1}, the matrices $A_j^{m_i+1}$ and $A_k^{m_i+1}$ are pairwise nonconjugated elements in $\Bd$.
\end{proof}

Note that, since $|\epsilon_{D_{i}}|\rightarrow\infty,$ then $|u_i|\rightarrow\infty$  (see Proposition \ref{Pliniobreak} and Lemma \ref{lemmaforsystole}). We will need to compare the volume growth of $M_i$ with the growth of $|u_i|$. To do so, it will be more convenient to use the notation $\N(u)=u \overline{u}=|u|^2$ as the usual \emph{norm} of an imaginary quadratic number, in order to refer to some known results.  

\begin{lemma}\label{030220.1}
 The sequences $\N(u_i)$ and $\vol(M_i)$ are related by
 $$\N(u_i) \ge \mu ~  \vol(M_i)^{\frac{1}{3}},$$
 where $\mu>0$ does not depend on $i$.
\end{lemma}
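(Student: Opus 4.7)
My plan is to relate $\vol(M_i)$ to the index of $\bar{\Gamma}_i$ in $\PSL_2(\Od)$ via the covering volume formula, and then to bound this index from above by a constant multiple of $\N(u_i)^3$. Since $\Gamma_i$ is torsion-free and $V_d := \vol(\PSL_2(\Od)\backslash\Hyp)$ is a positive constant depending only on $d$, the covering formula yields $\vol(M_i) = [\PSL_2(\Od):\bar{\Gamma}_i] \cdot V_d$, so it will suffice to show an upper bound of the form $[\PSL_2(\Od):\bar{\Gamma}_i] \leq C \cdot \N(u_i)^3$ for some constant $C = C(d) > 0$.

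By construction $\Gamma_i$ contains $\Bd[u_i\Od]$ with index two (as the preimage of $\{\mathrm{Id},\tau_i\mathrm{Id}\}$ under reduction), and because $\Od$ has class number one the reduction map $\Bd \to \SL_2(\Od/u_i\Od)$ is surjective. Hence $[\Bd:\Gamma_i] = \frac{1}{2}|\SL_2(\Od/u_i\Od)|$, and passing to $\PSL_2$ can only decrease the index since $[\PSL_2(\Od):\bar{\Gamma}_i] = [\Bd:\Gamma_i\cdot\{\pm I\}] \leq [\Bd:\Gamma_i]$. Therefore $[\PSL_2(\Od):\bar{\Gamma}_i] \leq \frac{1}{2}|\SL_2(\Od/u_i\Od)|$.

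Finally, the standard local formula $|\SL_2(\Od/\mathfrak{p}^e)| = \N(\mathfrak{p})^{3e}(1-\N(\mathfrak{p})^{-2})$ combined with the Chinese Remainder Theorem gives $|\SL_2(\Od/u_i\Od)| \leq \N(u_i)^3$. Putting everything together, $\vol(M_i) \leq (V_d/2)\,\N(u_i)^3$, and extracting cube roots yields the desired inequality with $\mu = (2/V_d)^{1/3}$. The proof is essentially a bookkeeping exercise; I do not anticipate any real obstacle beyond carefully tracking the factor of two arising from the projection $\SL_2(\Od) \to \PSL_2(\Od)$, which in any case only improves the final constant.
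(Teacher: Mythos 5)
Your proposal is correct and follows essentially the same route as the paper: express $\vol(M_i)$ as the covolume of $\Bd$ times the index of $\Gamma_i$, reduce to the index of the principal congruence subgroup $\Bd[u_i\Od]$ via the index-two containment, and bound that index above by $\N(u_i)^3$. The only difference is that you derive the bound $[\Bd:\Bd[u_i\Od]]\leq \N(u_i)^3$ from the local order formula and the Chinese Remainder Theorem, whereas the paper cites it from the literature; this is the correct (and only needed) direction of the two-sided estimate, and your bookkeeping of the factors of two is sound.
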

\begin{proof}
 The manifold $M_i$ is a normal covering of $\Bd\backslash\mathbb{H}^3$ with fibers of cardinality $[\SL_2(\Od):\Gamma_i]$. 
 Since $\Bd[u_i \Od]$ is a normal subgroup of $\Gamma_i$ of index 2, we have that 
 \begin{eqnarray} \label{030220.2}
  [\Bd:\Gamma_i]=2\cdot[\Bd:\Bd[u_i\Od]].
 \end{eqnarray}

On the other hand, it is well known that 
\begin{align} \label{030220.3}
 C ~ \N(u_i)^3 \leq [\Bd:\Bd[u_i\Od]] \leq  \N(u_i)^{3},
\end{align}
for some constant $C>0$ depending only on $k_d$ (\textit{c.f} \cite[Cor. 4.6]{KSV07}, \cite[Lem. 4.1]{Mur17}, see also \cite[Sec. 5]{Kuch15}).  
 Since  $\vol(M_i)=\vol(\Bd\backslash\mathbb{H}^3)[\SL_2(\Od):\Gamma_i]$ we obtain the result.
\end{proof}

\begin{proof}[Proof of Theorem \ref{thm:main_theorem}]

It follows from Proposition \ref{prop:many_systoles_1} and Lemma \ref{lem:orbits} that any $M_i$ satisfies 
\begin{equation}\label{eq:kiss_stabilizer}
\K(M_i)\ge\displaystyle\sum_{j=1}^{h(D_i)}|G_i\cdot\gamma_j|.
\end{equation}
On the other hand, the cardinality of the isotropy group $(G_i)_{\gamma_j}$ of $\gamma_j$ is equal to $[C(A_j^{m_i+1}):\langle A_j^{m_i+1}\rangle],$ where $C(A_j^{m_i+1})$ denotes the centralizer of $A_j^{m_i+1}$. Since each $A_j$ is primitive, it is well known that $C(A_{j}^{m_i+1})=\langle A_j, -\mathrm{Id}\rangle$. Hence $|(G_i)_{\gamma_j}| = 2(m_i +1) \leq 6,$ and  \eqref{eq:kiss_stabilizer} implies

\begin{align}
\K(M_i) \geq \sum_{j=1}^{h(D_i)}\frac{|G_i|}{|(G_i)_{\gamma_{j}}|}
        \geq\frac{h(D_i)|G_i|}{6}\label{eq:kiss_class_number_isometry_group}.
\end{align}

We now need to compare the quantities $|G_i|$ and $h(D_i)$ to the volume of $M_i$. To do that, we will relate these quantities to $\N(u_i),$  in view of Lemma \ref{030220.1}
%and then the latter to $\vol(M_i)$,

%The group $G_i$ is isomorphic to $\Bd / \Gamma_i,$ i.e. 
Since $|G_i|=[\Bd:\Gamma_i]$, by \eqref{030220.2} and the left-hand side of \eqref{030220.3} we have 
 $$|G_i|\ge C\N(u_i)^{3}.$$

The next step is to bound $h(D_i)$ in terms of $\N(u_i)$. By the relation $u_i^2D_i=t_i^2-4$ we have $|t_i|^2 \gtrsim \N(u_i)|D_i|,$ and it follows from Proposition \ref{Pliniobreak} that $|t_i|^2 \sim |\epsilon_{D_i}|^{2(m_i+1)}.$ Hence
\begin{align}\label{030220.4}
 |\epsilon_{D_i}|^2 \gtrsim |t_i|^{\frac{2}{m_i+1}} \gtrsim \left( \N(u_i)|D_i| \right)^{\frac{1}{m_i+1}}.
\end{align}

The function $x \mapsto \frac{x}{\log(x)}$ is increasing for large $x,$ and we apply this to \eqref{eq:Sarnak_avarage} and \eqref{030220.4} to obtain  
\begin{equation}\label{exponents_cases}
h(D_i) \gtrsim 2c_d \frac{(\N(u_i)|D_i|)^{\frac{1}{m_i+1}}}{\log((\N(u_i)|D_i|)^{\frac{1}{m_{i}+1}})}.
\end{equation}

 Since $m_i \in \{0,1,2\}$ we can suppose that the sequence $m_i$ is constant and equal to $m$. We have two cases to consider.

\textit{Case $m=0$}: In this case we use the simple lower bound $|D_i| >1$ to see that $|\N(u_i)D_i|>|\N(u_i)|$. Now, using again the fact that $x\mapsto\frac{x}{\log x}$ is increasing for large $x,$ we apply \eqref{exponents_cases}  in order to get 
\begin{equation}\label{eq:hd_1}
h(D_i) \gtrsim 2c_d \frac{\N(u_i)}{\log(\N(u_i))}.
\end{equation}

\textit{Case $m \in \{1,2\}$}: In this case we use Lemma \ref{coffeelemma} to obtain \(|D_i| \ge 30^{-\frac{2}{3}} \N(u_i)^{\frac{1}{3}}\)
%\begin{equation*}
%|D_i| \ge 30^{-\frac{2}{3}} \N(u_i)^{\frac{1}{3}}
%\end{equation*}
and then
\begin{equation} \label{blackboardfriday}
\left(\N(u_i)|D_i|\right)^{\frac{1}{m+1}} \ge 30^{-\frac{2}{3(m+1)}} \N(u_i)^{\frac{4}{3(m+1)}}.
\end{equation}
Using once more that $x\mapsto\frac{x}{\log x}$ is increasing for large $x,$ it follows from \eqref{exponents_cases} and \eqref{blackboardfriday} that
\begin{align*}
h(D_i) &\gtrsim 2c_d\cdot30^{-\frac{2}{3(m+1)}}\cdot \frac{\N(u_i)^{\frac{4}{3(m+1)}}}{\log\left(30^{-\frac{2}{3(m+1)}}\N(u_i)^{\frac{4}{3(m+1)}}\right)}\nonumber\\
&\gtrsim 2c_d\cdot30^{-\frac{2}{3(m+1)}}\cdot \frac{\N(u_i)^{\frac{4}{3(m+1)}}}{\log\left(\N(u_i)^{\frac{4}{3(m+1)}}\right)}.
\end{align*}
The last expression is minimized when $m=2$. Therefore, together with \eqref{eq:hd_1} we obtain that the sequence $\{D_i\}$ satisfies 
\begin{align*}
h(D_i)\gtrsim 2c_d\cdot30^{-\frac{2}{9}}\cdot \frac{\N(u_i)^{\frac{4}{9}}}{\log\left(\N(u_i)^{\frac{4}{9}}\right)}.
\end{align*}
Now, we can put together the lower bound for $|G_i|$ and $h(D_i)$ to obtain from \eqref{eq:kiss_class_number_isometry_group} that 
\begin{align*}
\K(M_i)\gtrsim \frac{c_d\cdot C\cdot30^{-\frac{2}{9}}}{3}\cdot \frac{\N(u_i)^{\frac{31}{9}}}{\log\left(\N(u_i)^{\frac{4}{9}}\right)}.
\end{align*}

The result now follows from Lemma \ref{030220.1} and the monotonicity of $\frac{x}{\log x}$ for large $x$. \qedhere

\end{proof}

\begin{corollary}[of the proof]\label{cor:final}
The manifolds $\{M_i\}$ given in the proof of Theorem \ref{thm:main_theorem} satisfy
 \[ \sys(M_i) \gtrsim \frac{1}{3}\log(\vol(M_i)). \] 
\end{corollary}

\begin{proof}
%  Consider the manifold $M_i$ defined in the proof of Theorem \ref{thm:main_theorem}.
 By Proposition \ref{systolestimate} it holds the equality \[ 4\cosh(\sys(M_i))=|t_i^2|+|t_i^2-4|=|t_i|^2+|D_i||u_i|^2. \]

Since $|D_i|>1$, then
% We can only guarantee that $|D_i|>1$, hence 
 \[\sys(M_i) \geq \cosh^{-1}\left(\frac{\N(u_{i})}{4}\right) \geq \log(\N(u_i))-\log(4).\]
 
By Lemma \ref{030220.1}, we have \[\sys(M_i) \geq \frac{1}{3}\log(\vol(M_i))+\log(\mu)-\log(4). \]
Since $\vol(M_i) \to \infty$ and $\mu$ does not depend on $i$, we get
 \[ \sys(M_i) \gtrsim \frac{1}{3}\log(\vol(M_i)). \]  
\end{proof}

\bibliographystyle{plain}

 \vspace{4mm}
  Cayo D\'oria\\
Universidade de Goi\'as, Instituto de Matem\'atica e Estat\'istica.\\ Rua Jacarand\'a -- Ch\'acaras Calif\'ornia, 74001-970. Goi\^ania - GO, Brazil.\\
 \textit{E-mail address}:\hspace{2mm}\texttt{cayodoria@ufg.br}

 \vspace{6mm}
 \hspace{-4mm}Plinio G. P. Murillo\\
Universidade Federal Fluminense, Instituto de Matem\'atica e Estat\'istica.\\
Rua Prof. Marcos Waldemar de Freitas Reis, S/n, Bloco H, Campus do Gragoat\~a, 24210-201. Niter\'oi - RJ, Brazil.\\
 \textit{E-mail address}:\hspace{2mm}\texttt{pliniom@id.uff.br}

\end{document}